\title{Evaluation of Lyapunov Exponent in Generalized Linear Dynamical Models of Queueing Networks\thanks{Proc. MATHMOD 09 Vienna Full Papers CD Volume (I.~Troch, F.~Breitenecker, eds.). ARGESIM, Vienna, 2009, pp.~706--717.}} 
\author{N.~K.~Krivulin\thanks{Faculty of Mathematics and Mechanics, St.~Petersburg State University, 28 Universitetsky Ave., St.~Petersburg, 198504, Russia, 
nkk@math.spbu.ru.}
}
\date{}
\newtheorem{theorem}{Theorem}
\newtheorem{lemma}[theorem]{Lemma}
\begin{document}

\maketitle

\begin{abstract}
The problem of evaluation of Lyapunov exponent in queueing network analysis is considered based on models and methods of idempotent algebra. General existence conditions for Lyapunov exponent to exist in generalized linear stochastic dynamic systems are given, and examples of evaluation of the exponent for systems with matrices of particular types are presented. A method which allow one to get the exponent is proposed based on some appropriate decomposition of the system matrix. A general approach to modeling of a wide class of queueing networks is taken to provide for models in the form of stochastic dynamic systems. It is shown how to find the mean service cycle time for the networks through the evaluation of Lyapunov exponent for their associated dynamic systems. As an illustration, the mean service time is evaluated for some systems including open and closed tandem queues with finite and infinite buffers, fork-join networks, and systems with round-robin routing.
\end{abstract}

\section{Introduction}

The evolution of actual systems encountered in economics, management, engineering, and other areas can frequently be represented through stochastic dynamic models of the form
$$
\bm{x}(k)=A(k)\bm{x}(k-1),
$$
where $ A(k) $ is a random state transition matrix, $ \bm{x}(k) $ is a state vector, and matrix-vector multiplication is thought of as defined in terms of a semiring with the operations of taking maximum and addition \cite{Cohen1988Subadditivity,Baccelli1993Synchronization,Kolokoltsov1997Idempotent,Litvinov1998Idempotent,Heidergott2006Maxplus}.

In many cases, the analysis of a system involves evaluation of asymptotic growth rate of the system state vector $ \bm{x}(k) $, which is normally referred to as Lyapunov exponent \cite{Baccelli1993Synchronization,Jean-Marie1994Analytical}. In the semiring with the operations of maximum and addition, the mean growth rate is defined in terms of the semiring as
$$
\lambda=\lim_{k\to\infty}\|\bm{x}(k)\|^{1/k}.
$$

Note that for queueing networks, the value of $ \lambda $ can be considered as the mean service cycle time in a system, while its inverse can be thought of as system throughput.

Existence conditions for the above limit can normally be established based on the ergodic theorem in \cite{Kingman1973Subadditive}. Specifically, examples of the conditions can be found in \cite{Cohen1988Subadditivity,Baccelli1993Synchronization,Glasserman1995Stochastic}). Note however, that these conditions frequently require that the system state transition matrix is of particular form or type (matrix with finite entries, irreducible matrix).

For systems with fixed nonrandom matrices, the value of $ \lambda $ can easily be found based on results in \cite{Romanovskii1967Optimization,Romanovskii1967Asymptotic} (see also \cite{Kolokoltsov1997Idempotent, Krivulin2007Ontheconvergence}. However, for stochastic systems with random state transition matrices, evaluation of $ \lambda $ normally appears to be a difficult problem. Existing results \cite{Cohen1988Subadditivity,Olsder1990Discrete,Jean-Marie1994Analytical,Krivulin2007Growth,Krivulin2008Evaluation} include the exact solution to the problem for second-order systems determined by matrices with entries that are independent and have either normal or exponential probability distributions. It is shown in \cite{Krivulin2001Onevaluation, Krivulin2002Evaluation,Krivulin2003Estimation, Krivulin2005Thegrowth} how the value of $ \lambda $ can be found for systems with particular matrices of arbitrary size, including triangular matrices, similarity matrices, and matrices of rank 1.

In the current paper we consider the problem of evaluation of Lyapunov exponent, which arises in the analysis of a wide class of queueing networks. We start with an overview of some concepts and results in idempotent algebra that underlie subsequent parts, and introduce related notations.

We consider a general model of stochastic dynamical system governed by the vector equation with the random matrices $ A(k) $ which are assumed to be independent and identically distributed for all $ k=1,2,\ldots $ New general conditions for the limit that determines the Lyapunov exponent $ \lambda $ to exist are given.

We present results of evaluating $ \lambda $ for systems with matrices of particular types, including diagonal and triangular matrices, similarity matrices, and matrices of rank $ 1 $. A new approach to evaluation of $ \lambda $ is then proposed based on a decomposition of $ A(k) $. The approach allows one to reduce the problem with the original matrix $ A(k) $ to that with a matrix of particular type and known solution.

Furthermore, we discuss application of the above results to the analysis of a class of queueing systems described in \cite{Krivulin1994Using,Krivulin1995Amax-algebra,Krivulin1996Analgebraic,Krivulin1996Max-plus}. Some particular systems including open and closed tandem queues with finite and infinite buffers, a fork-join network, and a system with round-robin routing are examined. For these queueing systems, we obtain the mean service cycle time through evaluation of Lyapunov exponent in their associated dynamical systems.

\section{Preliminary results}

\subsection{Idempotent algebra}

Consider a set $ \mathbb{X} $ with two operations $ \oplus $ and $ \otimes $ referred to as addition and multiplication, and neutral elements $ \mathbb{0} $ and $ \mathbb{1} $ called zero and identity. We suppose that $ (\mathbb{X},\oplus,\otimes) $ is a commutative semiring with idempotent addition and invertible multiplication. Such a semiring is normally called an idempotent semifield.

Let $ \mathbb{X}_{+}=\mathbb{X}\setminus\{\mathbb{0}\} $. Then each $ x\in\mathbb{X}_{+} $ is assumed to have the inverse $ x^{-1} $. For any $ x,y\in\mathbb{X}_{+} $, the power $ x^{y} $ is defined in an ordinary way. Note that in what follows, the sign $ \otimes $ will be omitted as is usual in conventional algebra. The notation of power is thought of as defined in terms of idempotent algebra. However, for the sake of simplicity, we use ordinary arithmetic operations in the expressions that represent exponents.

Since the addition is idempotent, one can define a linear order $ \leq $ on $ \mathbb{X} $ according to the rule: $ x\leq y $ if and only if $ x\oplus y=y $. Below the relation symbols $ \leq $ will be understood only in the sense of this linear order. According to the order, for any $ x\in\mathbb{X} $, we have $ x\geq\mathbb{0} $.

Examples of idempotent semirings (semifields) include
\begin{align*}
\mathbb{R}_{\max,+}&=(\mathbb{R}\cup\{-\infty\},\max,+),
&\mathbb{R}_{\min,+}&=(\mathbb{R}\cup\{+\infty\},\min,+), \\
\mathbb{R}_{\max,\times}&=(\mathbb{R}_{+}\cup\{0\},\max,\times),
&\mathbb{R}_{\min,\times}&=(\mathbb{R}_{+}\cup\{+\infty\},\min,\times),
\end{align*}
where $ \mathbb{R} $ is the set of real numbers, $ \mathbb{R}_{+}=\{x\in\mathbb{R}|x>0\} $.

Consider the semiring $ \mathbb{R}_{\max,+} $. It is easy to see that $ \mathbb{0}=-\infty $, $ \mathbb{1}=0 $. For each $ x\in\mathbb{R} $, there exists its inverse $ x^{-1} $, which is equal to $ -x $ in the ordinary arithmetics. For any $ x,y\in\mathbb{R} $, the power $ x^{y} $ coincides with the arithmetic product $ xy $. The linear order has conventional meaning.

For the semiring $ \mathbb{R}_{\min,\times} $, we have $ \mathbb{0}=+\infty $, $ \mathbb{1}=1 $. The inverse element and the power have ordinary meaning. The relation $ \leq $ determines an order that is the reverse of the natural linear order on $ \mathbb{R}_{+} $.

It is easy to verify that the semirings $ \mathbb{R}_{\max,+} $, $ \mathbb{R}_{\min,+} $, $ \mathbb{R}_{\max,\times} $, and $ \mathbb{R}_{\min,\times} $ are all isomorphic.

\subsection{Matrix algebra}

For any matrices $ A,B\in\mathbb{X}^{m\times n} $, $ C\in\mathbb{X}^{n\times l} $, and a scalar $ x\in\mathbb{X} $, the matrices $ A\oplus B $, $ BC $, and $ xA $ are defined in a usual way with the formulas
$$
\{A\oplus B\}_{ij}=\{A\}_{ij}\oplus\{B\}_{ij},
\quad
\{B C\}_{ij}=\bigoplus_{k=1}^{n}\{B\}_{ik}\{C\}_{kj},
\quad
\{x A\}_{ij}=x\{A\}_{ij}.
$$

The matrix with all its entries equal to $ \mathbb{0} $ is referred to as zero matrix and denoted by $ \mathbb{0} $. A matrix is called regular if every row of the matrix has at least one nonzero entry. Below we assume all nonzero matrix to be regular.

For each matrix $ A=(a_{ij})\in\mathbb{X}^{m\times n} $, its norm is defined as
$$
\|A\|=\bigoplus_{i=1}^{n}\bigoplus_{j=1}^{m}a_{ij}.
$$

For any matrices $ A $ and $ B $ of appropriate size, and a scalar $ x\geq\mathbb{1} $, it holds
$$
\|A\oplus B\|
=
\|A\|\oplus\|B\|,
\qquad
\|AB\|
\leq
\|A\|\|B\|,
\qquad
\|xA\|
=
x\|A\|.
$$

A square matrix is called diagonal if all its off-diagonal entries are zero, and triangular if its entries either above or below the diagonal are zero. The matrix $ I=\mathop\mathrm{diag}\nolimits(\mathbb{1},\ldots,\mathbb{1}) $ is referred to as identity matrix. The integer nonnegative powers of a matrix $ A $ are defined by the formulas $ A^{0}=I $, $ A^{k+l}=A^{k}A^{l} $ for all $ k,l=0,1,\ldots $

A matrix $ A\in\mathbb{X}^{n\times n} $ is a matrix of a similarity operator (similarity matrix) if there exists a constant $ \alpha\ne\mathbb{0} $ called the coefficient of similarity, such that for any $ \bm{x}\in\mathbb{X}^{n} $, it holds
$$
\|A\bm{x}\|=\alpha\|\bm{x}\|.
$$

A vector $ \bm{y}\in\mathbb{X}^{n} $ is said to be linearly dependent of $ \bm{x}_{1},\ldots,\bm{x}_{m}\in\mathbb{X}^{n} $ if $ \bm{y}=a_{1}\bm{x}_{1}\oplus\cdots\oplus a_{m}\bm{x}_{m} $ for some scalars $ a_{1},\ldots,a_{m}\in\mathbb{X} $. The rank of a matrix $ A $ is its maximum number of linearly independent rows (columns). A matrix $ A $ has the rank $ 1 $ if and only if $ A=\bm{x}\bm{y}^{T} $, where $ \bm{x} $ and $ \bm{y} $ are some nonzero vectors.

The sum of diagonal entries of a matrix $ A=(a_{ij})\in\mathbb{X}^{n\times n} $ is called its trace, and denoted by
$$
\mathop\mathrm{tr}\nolimits A=\bigoplus_{i=1}^{n}a_{ii}.
$$

A number $ \lambda $ is called an eigenvalue of the matrix $ A $ if there exists a vector $ \bm{x}\ne\mathbb{0} $ such that $ A\bm{x}=\lambda\bm{x} $.

The maximum (in the sense of the linear order on $ \mathbb{X} $) eigenvalue is called the spectral radius of $ A $ and evaluated as
$$
\rho(A)
=
\bigoplus_{m=1}^{n}\mathop\mathrm{tr}\nolimits^{1/m}(A^{m}).
$$

The next result has been obtained in \cite{Romanovskii1967Optimization, Romanovskii1967Asymptotic} (see also \cite{Kolokoltsov1997Idempotent, Krivulin2007Ontheconvergence}).
\begin{theorem}[Romanovskii]\label{T-limAkl}
For any matrix $ A\in\mathbb{X}^{n\times n} $ there exist limits
$$
\lim_{k\to\infty}\|A^{k}\|^{1/k}
=
\rho(A),
\qquad
\lim_{k\to\infty}\mathop\mathrm{tr}\nolimits^{1/k}(A^{k})
=
\rho(A).
$$
\end{theorem}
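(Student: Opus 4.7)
The plan is to interpret $A$ as the weighted adjacency matrix of a digraph $G$ on $n$ vertices with arc weights $a_{ij}$, so that $(A^k)_{ij}$ equals the maximum weight of a walk of length $k$ from $i$ to $j$, $\|A^k\|$ is the maximum weight over all walks of length $k$, and $\mathrm{tr}(A^k)$ is the maximum weight of a closed walk of length $k$. Since any closed walk of length greater than $n$ must revisit a vertex and hence decomposes into shorter closed walks whose maximum mean weight dominates the mean weight of the whole, one immediately obtains $\rho(A)=\bigoplus_{m\ge 1}\mathrm{tr}^{1/m}(A^m)$, so in particular $\mathrm{tr}^{1/k}(A^k)\le\rho(A)$ for every $k\ge 1$.

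I would next establish the norm limit. For the upper bound, any walk of length $k$ splits, by iteratively extracting closed subwalks between repeated vertices, into a simple path of length at most $n-1$ and a family of elementary cycles whose total length makes up the remainder. Each extracted cycle contributes at most $\rho(A)$ per arc, while the simple path contributes at most the finite constant $M=\bigoplus_{0\le\ell\le n-1}\|A^{\ell}\|$, giving the bound $\|A^k\|\le M\cdot\rho(A)^{k-n+1}$, from which $\limsup_{k}\|A^k\|^{1/k}\le\rho(A)$ follows after taking $k$-th roots. For the lower bound I would fix an elementary cycle $C^{*}$ of length $m^{*}\le n$ achieving the mean $\rho(A)$, pick a vertex $v$ on $C^{*}$, and construct a walk of length $k$ by concatenating $\lfloor k/m^{*}\rfloor$ copies of $C^{*}$ starting at $v$ with any walk of length $r=k-m^{*}\lfloor k/m^{*}\rfloor$ leaving $v$; such a continuation exists since $A$ is regular. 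The resulting bound $\|A^k\|\ge\rho(A)^{m^{*}\lfloor k/m^{*}\rfloor}\cdot w^{r}$, where $w$ denotes the minimum nonzero entry of $A$, yields $\liminf_{k}\|A^k\|^{1/k}\ge\rho(A)$.

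The trace limit uses the same optimal cycle $C^{*}$. From the identity $(A^{k+l})_{vv}=\bigoplus_{j}(A^k)_{vj}(A^l)_{jv}\ge(A^k)_{vv}(A^l)_{vv}$, one obtains by induction $(A^{qm^{*}})_{vv}\ge\rho(A)^{qm^{*}}$, so $\mathrm{tr}^{1/(qm^{*})}(A^{qm^{*}})=\rho(A)$ along the subsequence $k=qm^{*}$. Intermediate indices can be handled by attaching to $q$ full copies of $C^{*}$ a closed walk at $v$ of bounded length, whose existence uses regularity together with the structure of $G$. The main obstacle throughout is the walk-decomposition argument underlying the upper bound for $\|A^k\|$: one must show rigorously that every walk cleanly splits into a bounded transient plus a cyclic remainder whose weight per arc is uniformly controlled by $\rho(A)$, so that the boundary constant $M$ is absorbed in the limit after taking $k$-th roots.
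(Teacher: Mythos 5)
The paper offers no proof of this theorem: it is quoted from Romanovskii's papers with references, so there is nothing internal to compare against and your argument has to stand on its own. For the first limit, $\lim_{k\to\infty}\|A^{k}\|^{1/k}=\rho(A)$, it does: the decomposition of an arbitrary walk of length $k$ into an elementary path of length at most $n-1$ plus elementary cycles, each cycle contributing at most $\rho(A)$ per arc, gives $\limsup_{k}\|A^{k}\|^{1/k}\leq\rho(A)$ once the bounded constant $M$ is absorbed; and pumping a critical cycle $C^{*}$, which exists because regularity forces every vertex to have an outgoing arc and hence the graph to contain a cycle with $\rho(A)>\mathbb{0}$, gives the matching lower bound. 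Modulo the routine bookkeeping you acknowledge (the exact exponent on $\rho(A)$ in the upper bound depends on the sign of $\rho(A)$, and the case $A=\mathbb{0}$ is trivial), this half is the standard argument and is sound. The identity $\rho(A)=\bigoplus_{m\geq1}\mathop\mathrm{tr}\nolimits^{1/m}(A^{m})$ and the consequent bound $\mathop\mathrm{tr}\nolimits^{1/k}(A^{k})\leq\rho(A)$ are likewise correct.

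The gap is in the trace limit at the ``intermediate indices.'' You assert that to $q$ traversals of $C^{*}$ one can attach a closed walk at $v$ of the residual length $r=k-qm^{*}$, with existence following from ``regularity together with the structure of $G$.'' Regularity gives outgoing walks of every length, but not \emph{closed} walks of every length, and in general none exist: take $n=2$ with $a_{12},a_{21}$ finite and both diagonal entries equal to $\mathbb{0}$. This matrix is regular, $\rho(A)=\mathop\mathrm{tr}\nolimits^{1/2}(A^{2})$ is finite, yet every closed walk has even length, so $\mathop\mathrm{tr}\nolimits(A^{k})=\mathbb{0}$ for all odd $k$ and the sequence $\mathop\mathrm{tr}\nolimits^{1/k}(A^{k})$ oscillates between $\mathbb{0}$ and $\rho(A)$. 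Your superadditivity argument along the subsequence $k=qm^{*}$ is correct and is all that can be extracted without a further hypothesis; to get the full limit one needs aperiodicity (cyclicity one) of the critical graph, or a reading of the second limit as a $\limsup$ or as the limit of $\bigoplus_{m\leq k}\mathop\mathrm{tr}\nolimits^{1/m}(A^{m})$, or an argument of a genuinely different kind. As written, your proof of the second limit fails exactly at the point where the statement itself is delicate, and you should either add the required hypothesis or restate what is actually being proved.
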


\subsection{Properties of the expected value}

Consider some inequalities which relate evaluation of the expected value of random variables to operations involved in idempotent semirings. To be definite, assume that the semiring of interest is $ \mathbb{R}_{\max,+} $.

We suppose that all random variables under considerations are defined on a common probability space and have finite mean values. 

It is easy to verify that for any random variables $ \xi $ and $ \eta $ it holds
$$
\mathsf{E}(\xi\oplus\eta)
\geq
\mathsf{E}\xi\oplus\mathsf{E}\eta,
\qquad
\mathsf{E}\xi\eta
=
\mathsf{E}\xi\mathsf{E}\eta.
$$

Let $ A $ be a random matrix. We use the symbol $ \mathsf{E}A $ to denote the matrix obtained from $ A $ by replacing all its entries with their expected values, provided that $ \mathsf{E}\mathbb{0}=\mathbb{0} $.

For any random matrices $ A $ and $ B $ of appropriate size, it holds
$$
\mathsf{E}(A\oplus B)
\geq
\mathsf{E}A\oplus\mathsf{E}B,
\quad
\mathsf{E}AB
\geq
\mathsf{E}A\mathsf{E}B,
\quad
\mathsf{E}\|A\|
\geq
\|\mathsf{E}A\|,
\quad
\mathsf{E}\mathop\mathrm{tr}\nolimits A
\geq
\mathop\mathrm{tr}\nolimits(\mathsf{E}A).
$$

\section{Stochastic dynamical systems}

Let $ A(k)\in\mathbb{X}^{n\times n} $ be a random state transition matrix, $ \bm{x}(k)\in\mathbb{X}^{n} $ be a state vector, $ k=1,2,\ldots $ Consider a dynamical system described by the equation
$$
\bm{x}(k)
=
A^{T}(k)\bm{x}(k-1).
$$

Note that the representation in the form with the transpose is intended to simplify further formulae. Clearly, the use of the matrix $ A^{T}(k) $ does not actually change the general form of the equation which is normally written as $ \bm{x}(k)=A(k)\bm{x}(k-1) $.

In particular, the above representation allows products of matrices $ A(1),\ldots,A(k) $ to be examined in the natural order. With the notation
$$
A_{k}
=
A(1)\cdots A(k),
$$
iterating the dynamic equation gives $ \bm{x}(k)=A_{k}^{T}\bm{x}(0) $.

Suppose that the sequence $ \{A(k) | k\geq1\} $ consists of independent and identically distributed random matrices, $ \mathsf{E}\|A_{1}\| $ is finite. However, we do not require that for each $ k=1,2,\ldots $, the entries of $ A(k) $ are independent.

Note that most of the statements below remain valid if the matrices $ A(k) $ form a stationary sequence rather than a sequence of independent and identically distributed matrices.

\subsection{Lyapunov exponent}

In many cases, the analysis of a system involves evaluation of asymptotic growth rate of the system state vector $ \bm{x}(k) $, which is frequently referred to as Lyapunov exponent. In the semirings $ \mathbb{R}_{\max,+} $ and $ \mathbb{R}_{\min,+} $, the mean growth rate is defined as
\begin{equation}\label{E-lambdax}
\lambda=\lim_{k\to\infty}\|\bm{x}(k)\|^{1/k}
\end{equation}
provided that the limit on the right side exists.

In $ \mathbb{R}_{\max,\times} $ and $ \mathbb{R}_{\min,\times} $, we have
$$
\lambda
=
\lim_{k\to\infty}\log\|\bm{x}(k)\|^{1/k}.
$$

In what follows, we will concentrate on the semiring $ \mathbb{R}_{\max,+} $. Taking into account isomorphisms between $ \mathbb{R}_{\max,+} $ and the semirings $ \mathbb{R}_{\min,+} $, $ \mathbb{R}_{\max,\times} $ and $ \mathbb{R}_{\min,\times} $, all results below can easily be extended to these semirings.

Suppose that the entries of the initial vector $ \bm{x}(0) $ are finite with probability one (w.~p.~1). Then the mean growth rate $ \lambda $ can be defined as
\begin{equation}\label{E-lambdaA}
\lambda
=
\lim_{k\to\infty}
\|A_{k}\|^{1/k}.
\end{equation}

\subsection{Existence conditions}

General conditions for limit \eqref{E-lambdaA} to exist can be established based on the classical result in \cite{Kingman1973Subadditive}, which is presented below in terms of ordinary arithmetic operations.
\begin{theorem}[Kingman]\label{T-King}
Let $ \{\zeta_{lm} | l<m \} $ be a family of random variables which satisfy the following properties:
\begin{enumerate}
\item $ \zeta_{lm}\leq\zeta_{lk}+\zeta_{km} $ (subadditivity);
\item The joint distributions are the same for both families $ \{\zeta_{lm} | l<m \} $ and $ \{\zeta_{l+1,m+1}| l<m \} $ (stationarity);
\item For all $ n=1,2,\ldots $, there exists $ \mathsf{E}\zeta_{0k}\geq -ck $ for some positive constant $ c $ (boundedness).
\end{enumerate}

Then there exists a constant $ \lambda $, such that it holds
$$
\lim_{k\to\infty}\zeta_{0k}/k=\lambda
\quad
\text{w.~p.~1},
\qquad
\lim_{k\to\infty}\mathsf{E}\zeta_{0k}/k=\lambda.
$$
\end{theorem}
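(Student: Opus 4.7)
The plan is to split the argument into the deterministic convergence of the expectations and the almost-sure convergence, the latter obtained by sandwiching $\liminf$ and $\limsup$ around the common value $\lambda$.

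First, I would take expectations in the subadditivity relation: setting $a_k=\mathsf{E}\zeta_{0k}$ and using stationarity (which gives $\mathsf{E}\zeta_{l,l+m}=\mathsf{E}\zeta_{0m}=a_m$), I obtain $a_{l+m}\leq a_l+a_m$. Fekete's lemma for subadditive sequences then yields $a_k/k\to\lambda:=\inf_{k\geq 1}a_k/k$, and the boundedness hypothesis $a_k\geq -ck$ forces $\lambda\geq -c>-\infty$. This already proves the second, in-expectation statement of the theorem.

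Next, for the upper bound on the almost-sure limsup, I would fix $k\geq 1$ and, for each large $n$, write $n=qk+r$ with $0\leq r<k$. Iterated use of subadditivity gives the block decomposition $\zeta_{0n}\leq\zeta_{0k}+\zeta_{k,2k}+\cdots+\zeta_{(q-1)k,qk}+\zeta_{qk,n}$. By the stationarity hypothesis, the blocks $Y_j=\zeta_{jk,(j+1)k}$ form a stationary sequence with $\mathsf{E}Y_0=a_k$, and because the driving matrices $A(k)$ are independent and identically distributed the underlying shift is ergodic, so Birkhoff's ergodic theorem gives $(Y_0+\cdots+Y_{q-1})/q\to a_k$ almost surely. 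Dividing by $n\sim qk$ and absorbing the bounded remainder $\zeta_{qk,n}/n$, I conclude $\limsup_n\zeta_{0n}/n\leq a_k/k$ almost surely; intersecting the resulting null sets over $k$ and taking the infimum yields $\limsup_n\zeta_{0n}/n\leq\lambda$ a.s.

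The main obstacle is the matching lower bound $\liminf_n\zeta_{0n}/n\geq\lambda$ a.s., since subadditivity supplies no reverse inequality on $\zeta_{0n}$. The standard route, which I would follow, is to note that $g:=\liminf_n\zeta_{0n}/n$ is shift-invariant and hence a.s.\ constant by ergodicity, and then to establish $\mathsf{E}g\geq\lambda$ by a truncation-and-stopping argument in the spirit of Kingman's original proof (or its streamlined versions by Derriennic and Steele): roughly, one selects for each $\omega$ a near-optimal block length at which $\zeta_{0k}/k$ is close to $g(\omega)$, concatenates such blocks with a controlled error, and obtains a subadditive bound on $a_n/n$ that would violate the definition of $\lambda$ unless $g\geq\lambda$. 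Combined with the upper bound from the previous paragraph, this forces $\lim_n\zeta_{0n}/n=\lambda$ almost surely, completing the proof.
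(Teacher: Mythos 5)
The paper does not prove this statement: it is Kingman's subadditive ergodic theorem, quoted as a classical result and cited to Kingman (1973); the only related work in the paper is the verification of its hypotheses in Theorem~3. So there is no in-paper proof to compare against, and your proposal has to stand on its own as a proof of Kingman's theorem.

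Judged that way, it has a genuine gap. The convergence of $\mathsf{E}\zeta_{0k}/k$ via Fekete's lemma and the almost-sure upper bound $\limsup_{n}\zeta_{0n}/n\leq a_{k}/k$ via the block decomposition are correct and standard (modulo the routine control of the remainder term $\zeta_{qk,n}/n$, which you rightly flag). But the matching lower bound $\liminf_{n}\zeta_{0n}/n\geq\lambda$ is precisely the hard half of Kingman's theorem, and you do not prove it: you describe in one sentence what ``the standard route'' would do and then cite Kingman, Derriennic and Steele. Deferring the central difficulty to the literature that the theorem comes from is not a proof; if the intent is simply to use the result as known, that is exactly what the paper does, and the first two paragraphs of your argument become unnecessary. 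Two secondary issues: the statement as given contains no ergodicity hypothesis, so in general the almost-sure limit is an invariant random variable rather than the constant $\lambda$; you import ergodicity from the i.i.d.\ matrices $A(k)$, which belong to the application in Theorem~3 and not to the abstract family $\{\zeta_{lm}\}$ you are asked to treat. Likewise, your appeal to Birkhoff's theorem for the blocks $Y_{j}=\zeta_{jk,(j+1)k}$ requires stationarity and ergodicity of the $k$-step shift, neither of which is among the stated hypotheses.
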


For the semiring $ \mathbb{R}_{\max,+} $, the existence conditions can be formulated as follows.
\begin{theorem}\label{T-King1}
Let $ \{A(k) | k\geq1\} $ be a stationary sequence of random matrices, $ \mathsf{E}\|A_{1}\|<\infty $ and $ \rho(\mathsf{E}[A_{1}])>-\infty $. Then there exists a finite number $ \lambda $ such that
$$
\lim_{k\to\infty}\|A_{k}\|^{1/k}=\lambda
\quad
\mbox{w.~p.~1},
\qquad
\lim_{k\to\infty}\mathsf{E}\|A_{k}\|^{1/k}=\lambda.
$$
\end{theorem}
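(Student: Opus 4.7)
The plan is to apply Kingman's subadditive ergodic theorem (Theorem \ref{T-King}) to the two-parameter family of real-valued random variables
$$
\zeta_{lm} = \|A(l+1)\cdots A(m)\|, \qquad l < m,
$$
where the matrix product is taken in the idempotent sense and the norm is read as an ordinary real number (or $-\infty$). Once Kingman supplies a constant $\lambda$ with $\zeta_{0k}/k \to \lambda$, the claim $\|A_k\|^{1/k}\to\lambda$ in $\mathbb{R}_{\max,+}$ is just a rewriting, since $x^{1/k}$ in $\mathbb{R}_{\max,+}$ is ordinary division of a real exponent by $k$. So the work reduces to checking the three hypotheses of Kingman and verifying finiteness of $\lambda$.

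The subadditivity hypothesis follows at once from the associativity of the matrix product together with the submultiplicative property $\|AB\|\leq\|A\|\|B\|$ recorded in Section 2.2. Splitting $A(l+1)\cdots A(m)=(A(l+1)\cdots A(k))(A(k+1)\cdots A(m))$ and noting that multiplication in $\mathbb{R}_{\max,+}$ corresponds to ordinary addition yields $\zeta_{lm}\leq\zeta_{lk}+\zeta_{km}$. Stationarity of the family $\{\zeta_{lm}\}$ under the shift $(l,m)\mapsto(l+1,m+1)$ is inherited from the stationarity of $\{A(k)\}$, because $\zeta_{lm}$ is a fixed measurable function of $A(l+1),\ldots,A(m)$.

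The boundedness condition is the substantive step, and the only place where the hypothesis $\rho(\mathsf{E}[A_1])>-\infty$ is used; I view it as the main obstacle. Chaining the expectation inequalities of Section 2.3, one obtains
$$
\mathsf{E}\zeta_{0k} = \mathsf{E}\|A_k\| \geq \|\mathsf{E}A_k\| \geq \|(\mathsf{E}A_1)^k\|,
$$
the second estimate arising from iterated application of $\mathsf{E}(AB)\geq\mathsf{E}A\,\mathsf{E}B$ together with the entrywise monotonicity of $\|\cdot\|$. Romanovskii's theorem (Theorem \ref{T-limAkl}) then gives $\|(\mathsf{E}A_1)^k\|^{1/k}\to\rho(\mathsf{E}A_1)$, so, since $\rho(\mathsf{E}A_1)$ is finite, the sequence $\|(\mathsf{E}A_1)^k\|/k$ is bounded below by some constant $-c$. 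Consequently $\mathsf{E}\zeta_{0k}\geq -ck$, which is precisely Kingman's hypothesis (3).

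Kingman's theorem now produces a constant $\lambda$ with $\zeta_{0k}/k\to\lambda$ almost surely and $\mathsf{E}\zeta_{0k}/k\to\lambda$, giving the two convergences in the statement. For finiteness, the lower bound $\lambda\geq\rho(\mathsf{E}A_1)>-\infty$ is immediate from the inequality above, while iterating $\|AB\|\leq\|A\|+\|B\|$ and taking expectations yields $\mathsf{E}\zeta_{0k}\leq k\,\mathsf{E}\|A_1\|$, hence $\lambda\leq\mathsf{E}\|A_1\|<\infty$ by the standing hypothesis. Thus $\lambda$ is finite, completing the argument.
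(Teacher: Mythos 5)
Your proof is correct and follows essentially the same route as the paper: apply Kingman's theorem to $\zeta_{lm}=\|A(l+1)\cdots A(m)\|$, with subadditivity from submultiplicativity of the norm and boundedness from $\mathsf{E}\|A_{k}\|\geq\|(\mathsf{E}A_{1})^{k}\|$. The only cosmetic difference is that the paper closes the boundedness step with the direct inequality $\|(\mathsf{E}A_{1})^{k}\|\geq\rho^{k}(\mathsf{E}A_{1})$ rather than invoking Romanovskii's limit, and your explicit two-sided check of the finiteness of $\lambda$ is a welcome addition the paper leaves implicit.
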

\begin{proof}
Consider the family $ \{\zeta_{lm} | l<m \} $ with $ \zeta_{lm}=\|A(l+1)A(l+2)\cdots A(m)\| $, and note that $ \zeta_{0k}=\|A_{k}\| $.

Let us verify that the conditions of Theorem~\ref{T-King} are fulfilled.

Subadditivity follows from the inequality
$$
\|A(l+1)\cdots A(m)\|
\leq
\|A(l+1)\cdots A(k)\|\|A(k+1)\cdots A(m)\|.
$$

It is clear that the family is stationary. From the condition $ \mathsf{E}\|A_{1}\|<\infty $ it follows that $ \mathsf{E}\|A_{k}\|<\infty $. Furthermore, since $ \rho(\mathsf{E}A_{1})>\mathbb{0}=-\infty $ and $ \mathsf{E}\|A_{k}\|=\mathsf{E}\|A(1)\cdots A(k)\|\geq\|(\mathsf{E}A_{1})^{k}\|\geq\rho^{k}(\mathsf{E}A_{1}) $, the family is bounded.

By applying Theorem~\ref{T-King}, we arrive at the desired result.
\end{proof}

\section{Evaluation of Lyapunov exponent}

Now we give examples of evaluation of Lyapunov exponent for systems with matrices of particular types \cite{Krivulin2001Onevaluation, Krivulin2002Evaluation,Krivulin2003Estimation, Krivulin2005Thegrowth}. Note that for each system under consideration, the conditions of Theorem~\ref{T-King1} are assumed to be fulfilled.

\subsection{Systems with diagonal matrix}

Suppose that $ A(k)=\mathop\mathrm{diag}\nolimits(d_{1}(k),\ldots,d_{n}(k)) $ is a diagonal matrix, $ k=1,2,\ldots $ It is easy to verify that
$$
\lambda
=
\mathop\mathrm{tr}\nolimits(\mathsf{E}A_{1}).
$$

\subsection{Systems with similarity matrix}

Let $ A(k) $ be the matrix of a similarity operator, $ k=1,2,\ldots $ Then it holds
$$
\lambda
=
\mathsf{E}\|A_{1}\|.
$$

\subsection{Systems with matrix of rank $ 1 $}

Suppose that for all $ k=1,2,\ldots $, there exist vectors $ \bm{u}(k) $ and $ \bm{v}(k) $ such that $ A(k)=\bm{u}(k)\bm{v}^{T}(k) $. Then it holds
$$
\lambda
=
\mathsf{E}[\bm{v}^{T}(1)\bm{u}(2)].
$$

\subsection{Systems with triangular matrix}

The following result was obtained in \cite{Krivulin2005Thegrowth} based on classical bounds for the mean values of sums of independent random variables combined with new inequalities for products of triangular matrices in idempotent algebra.

\begin{theorem}\label{T-lambdaTr}
If the matrix $ A(k) $ is triangular $ k=1,2,\ldots $, then it holds
$$
\lambda
=
\mathop\mathrm{tr}\nolimits(\mathsf{E}A_{1}).
$$
\end{theorem}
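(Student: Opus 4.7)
The plan is to prove matching bounds $\lambda\geq\mathop{\mathrm{tr}}(\mathsf{E}A_{1})$ and $\lambda\leq\mathop{\mathrm{tr}}(\mathsf{E}A_{1})$ around the a.s.\ limit $\lambda$ whose existence is guaranteed by Theorem~\ref{T-King1}. Assume without loss of generality that $A(k)$ is lower triangular; then $A_{k}=A(1)\cdots A(k)$ is lower triangular too. A non-increasing length-$k$ path from $i$ to $i$ in $\{1,\dots,n\}$ is forced to be constant, so $(A_{k})_{ii}=\bigotimes_{l=1}^{k}a_{ii}(l)$, which in ordinary arithmetic is the iid sum $\sum_{l=1}^{k}a_{ii}(l)$.

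For the lower bound, the strong law of large numbers yields $(A_{k})_{ii}/k\to\mathsf{E}a_{ii}(1)$ a.s.\ for each $i$, and $\|A_{k}\|\geq(A_{k})_{ii}$ then gives $\lambda\geq\max_{i}\mathsf{E}a_{ii}(1)=\mathop{\mathrm{tr}}(\mathsf{E}A_{1})$.

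For the upper bound, I would expand each entry $(A_{k})_{ij}$ with $i\geq j$ as a max over weights of non-increasing paths $i=s_{0}\geq\cdots\geq s_{k}=j$, and group paths by the sequence of \emph{distinct} levels visited, $r_{0}>r_{1}>\cdots>r_{m}$ with $m\leq n-1$, together with the set of descent times $t_{1}<\cdots<t_{m}$. Each path's weight splits into at most $n-1$ off-diagonal ``descent'' contributions $a_{r_{p-1},r_{p}}(t_{p})$ plus, at every visited level $r_{p}$, an iid interval sum $\sum_{l\in I_{p}}a_{r_{p}r_{p}}(l)$ where $I_{p}\subseteq[1,k]$ has length $L_{p}$ and $\sum_{p}L_{p}\leq k$. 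Two pathwise-uniform $o(k)$ estimates then suffice: for each diagonal index $s$, $\max_{I\subseteq[1,k]}|\sum_{l\in I}a_{ss}(l)-|I|\mathsf{E}a_{ss}(1)|=o(k)$ a.s., obtained by telescoping an interval sum into a difference of endpoint partial sums and invoking the SLLN; and for each off-diagonal pair $(i,j)$, $\max_{l\leq k}a_{ij}(l)=o(k)$ a.s., obtained from $a_{ij}(l)/l\to 0$ which follows from finiteness of $\mathsf{E}a_{ij}(1)$. Combining these, every path weight is at most $k\mathop{\mathrm{tr}}(\mathsf{E}A_{1})+o(k)$ \emph{uniformly} in the choice of path, so the $o(k)$ survives the (polynomial-in-$k$) maximum over $(i,j)$, level sequences, and descent times, delivering $\|A_{k}\|\leq k\mathop{\mathrm{tr}}(\mathsf{E}A_{1})+o(k)$ a.s., hence $\limsup_{k\to\infty}\|A_{k}\|^{1/k}\leq\mathop{\mathrm{tr}}(\mathsf{E}A_{1})$.

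The main obstacle is obtaining the two fluctuation estimates \emph{uniformly} over admissible intervals and descent times under only the finite-mean hypothesis; the enabling observations are the reduction of interval-indexed maxima to endpoint partial sums, and the fact that restricting to at most $n-1$ descents keeps the number of paths polynomial in $k$, so that a pathwise $o(k)$ bound automatically transfers to the maximum. This blend of ``new inequalities for products of triangular matrices in idempotent algebra'' with ``classical bounds for the mean values of sums of independent random variables'' is what is encapsulated in~\cite{Krivulin2005Thegrowth}.
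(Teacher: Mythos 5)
The paper does not actually prove Theorem~\ref{T-lambdaTr} --- it only cites \cite{Krivulin2005Thegrowth} and describes the method as classical bounds for sums of independent random variables combined with inequalities for products of triangular matrices --- and your argument is a correct, self-contained realization of exactly that strategy: the constancy of non-increasing return paths plus the SLLN gives the lower bound $\lambda\geq\mathop{\mathrm{tr}}(\mathsf{E}A_{1})$, and the path decomposition with the two fluctuation estimates (interval sums reduced to endpoint partial sums, and $\max_{l\leq k}a_{ij}(l)=o(k)$ from finite means) gives the matching upper bound. Two small caveats: the appeal to the polynomial number of paths is unnecessary and would not suffice by itself --- a maximum of polynomially many individually-$o(k)$ quantities need not be $o(k)$; what actually carries the maximum is the \emph{uniformity} over intervals and descent times of your two estimates, which you do establish --- and, exactly as in the paper, entries equal to $\mathbb{0}=-\infty$ with positive probability (or with non-integrable negative part) are left implicitly excluded from the diagonal SLLN step.
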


\subsection{A matrix decomposition method}

If the state transition matrix of a system falls into the one of the particular types examined above, evaluation of Lyapunov exponent presents no special problems. In the case that the matrix has a different type, one can try to implement the following approach.

Let us assume that there exists a decomposition of the matrix $ A (k) $ in the form
$$
A(k)
=
B(k)C(k),
$$
where $ B(k) $ and $ C(k) $ are some independent matrices. If a solution to the problem of evaluation of Lyapunov exponent for the system with the matrix
$$
A^{\prime}(k)
=
C(k)B(k+1)
$$
is known, it can normally be taken as the solution for the system with the initial matrix $ A(k) $. Otherwise, one can continue with decomposition of $ A^{\prime}(k) $.

Note that for any matrix $ A(k) $ of rank $ 1 $, the above decomposition exists and takes the form $ A(k)=\bm{u}(k)\bm{v}^{T}(k) $.

\subsection{A system with a matrix of incomplete rank}

Consider a matrix $ A $ of order $ n $. Suppose that $ \mathop\mathrm{rank}\nolimits A<n $. Then there exits a skeleton decomposition of the matrix
$$
A
=
BC.
$$
The above decomposition will be referred to as backward triangular if the matrix $ CB $ is triangular.

\begin{theorem}\label{T-lambdaC1B2}
If the matrix $ A(k) $ allows for the backward triangular skeleton decomposition $ A(k)=B(k)C(k) $ with independent factors $ B(k) $ and $ C(k) $, then
$$
\lambda
=
\mathop\mathrm{tr}\nolimits(\mathsf{E}[C(1)B(1)]).
$$
\end{theorem}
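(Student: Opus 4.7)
The plan is to re-bracket the product $A_k = A(1)\cdots A(k)$ so that triangular matrices appear as the inner factors, and then appeal to Theorem~\ref{T-lambdaTr}. Set $D(i)=C(i)B(i+1)$ for $i=1,2,\ldots$ Using $A(j)=B(j)C(j)$ and associativity, one obtains the two identities
\[
A_{k+1} = B(1)\,D(1)D(2)\cdots D(k)\,C(k+1),
\qquad
D(1)\cdots D(k) = C(1)\,A(2)\cdots A(k)\,B(k+1).
\]

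Next I would verify that $\{D(i)\}$ satisfies the hypotheses of Theorem~\ref{T-lambdaTr}. Each $D(i)$ depends only on the independent factors $C(i)$ and $B(i+1)$; distinct $D(i)$ and $D(j)$ involve disjoint $B$'s and $C$'s (with the innermost shared index separated by within-pair independence), so the $D(i)$ are mutually independent, and every pair $(C(i),B(i+1))$ has the same joint law as $(C(1),B(1))$. Hence $\{D(i)\}$ is i.i.d.\ with common distribution equal to that of $C(1)B(1)$. By the backward triangularity hypothesis $C(1)B(1)$ is triangular, and since the zero pattern producing this structure is a built-in feature of the skeleton decomposition---shared by every cross-product $C(i)B(j)$---each $D(i)$ is itself triangular. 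Theorem~\ref{T-lambdaTr} therefore gives
\[
\lim_{k\to\infty}\|D(1)\cdots D(k)\|^{1/k} = \mathop\mathrm{tr}\nolimits(\mathsf{E}[D(1)]) = \mathop\mathrm{tr}\nolimits(\mathsf{E}[C(1)B(1)]).
\]

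To conclude, I would identify $\lambda=\lim_k\|A_k\|^{1/k}$ with this value by applying submultiplicativity $\|XY\|\leq\|X\|\|Y\|$ to the two identities above, obtaining
\[
\|A_{k+1}\| \leq \|B(1)\|\,\|D(1)\cdots D(k)\|\,\|C(k+1)\|,
\]
\[
\|D(1)\cdots D(k)\| \leq \|C(1)\|\,\|A(2)\cdots A(k)\|\,\|B(k+1)\|.
\]
After taking $1/k$-th powers, the deterministic boundary factors $\|B(1)\|^{1/k}$ and $\|C(1)\|^{1/k}$ converge to $\mathbb{1}$, and the i.i.d.\ terminal factors $\|B(k+1)\|^{1/k}$, $\|C(k+1)\|^{1/k}$ do so almost surely by the standard fact that an integrable i.i.d.\ sequence $\{X_k\}$ satisfies $X_k/k\to 0$ a.s. By stationarity, $\|A(2)\cdots A(k)\|^{1/k}$ shares its limit with $\|A_k\|^{1/k}$, so the first inequality yields $\lambda\leq\mathop\mathrm{tr}\nolimits(\mathsf{E}[C(1)B(1)])$ and the second the reverse.

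The main obstacle I anticipate is justifying that each $D(i)=C(i)B(i+1)$ is literally triangular, and not just identically distributed to the triangular matrix $C(1)B(1)$. This forces the backward triangularity assumption to be read as a \emph{structural} property of the skeleton decomposition, so that the positions of the zero entries are common to all cross-products $C(i)B(j)$; without that reading, the re-bracketing trick fails to deliver triangular factors and the reduction to Theorem~\ref{T-lambdaTr} breaks down.
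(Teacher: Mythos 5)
Your proof is correct, and its core device---re-bracketing $A_{k}=B(1)\bigl(\prod_{j}C(j)B(j+1)\bigr)C(k)$ so that the inner product consists of i.i.d.\ triangular factors and then invoking Theorem~\ref{T-lambdaTr}---is exactly the paper's; your upper bound $\lambda\leq\mathop\mathrm{tr}\nolimits(\mathsf{E}[C(1)B(1)])$ coincides with the paper's argument step for step. Where you genuinely diverge is the lower bound. The paper bounds $\mathsf{E}\|A_{k}\|\geq\mathsf{E}\mathop\mathrm{tr}\nolimits A_{k}$, uses cyclicity of the trace together with the inequalities $\mathsf{E}(XY)\geq\mathsf{E}X\,\mathsf{E}Y$ and $\mathsf{E}\mathop\mathrm{tr}\nolimits A\geq\mathop\mathrm{tr}\nolimits(\mathsf{E}A)$ to obtain $\mathsf{E}\|A_{k}\|\geq\mathop\mathrm{tr}\nolimits\bigl(\mathsf{E}[C(1)B(1)]^{k}\bigr)$, and then applies Romanovskii's theorem (Theorem~\ref{T-limAkl}) to identify the limit as $\rho(\mathsf{E}[C(1)B(1)])$, which equals the trace for a triangular matrix. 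You instead run submultiplicativity in the opposite direction through the reverse factorization $D(1)\cdots D(k)=C(1)A(2)\cdots A(k)B(k+1)$. This is more symmetric and dispenses with both Romanovskii's theorem and the expectation inequalities, at the price of requiring $\|C(1)\|^{1/k}\to\mathbb{1}$ and $\|B(k+1)\|^{1/k}\to\mathbb{1}$ almost surely, i.e.\ integrability of $\|B(1)\|$ and $\|C(1)\|$---a hypothesis the paper's own upper bound already uses implicitly for its boundary factors $\|B(1)\|$ and $\|C(k)\|$, so you are not assuming more than the paper does. Your closing caveat is also well taken: the paper asserts without comment that every $C(j)B(j+1)$ is triangular, which presupposes, as you note, that the zero pattern of the skeleton decomposition is deterministic; this holds in all of the paper's queueing applications, where the pattern is dictated by the network graph.
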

\begin{proof}
Consider the matrix
$$
A_{k}
=
\prod_{j=1}^{k}B(j)C(j)
=
B(1)\left(\prod_{j=1}^{k-1}C(j)B(j+1)\right)C(k).
$$

Furthermore, we have
\begin{multline*}
\mathsf{E}\|A_{k}\|
\geq
\mathsf{E}\mathop\mathrm{tr} A_{k}
\geq
\mathsf{E}\left[\mathop\mathrm{tr}\nolimits\left(C(k)B(1)\prod_{j=1}^{k-1}C(j)B(j+1)\right)\right]
\\
\geq
\mathop\mathrm{tr}\left(\mathsf{E}[C(1)B(1)]^{k}\right).
\end{multline*}

By applying Theorem~\ref{T-limAkl}, we get
$$
\lim_{k\to\infty}\mathsf{E}\|A_{k}\|^{1/k}
\geq
\lim_{k\to\infty}\left(\mathop\mathrm{tr}\nolimits(\mathsf{E}[C(1)B(1)]^{k})\right)^{1/k}
=
\rho(\mathsf{E}[C(1)B(1)]).
$$

Since $ C(1)B(1) $ is a triangular matrix, we have the inequality
$$
\lambda
\geq
\mathop\mathrm{tr}(\mathsf{E}[C(1)B(1)]).
$$

It remains to verify that the opposite inequality is also valid. First we write
$$
\|A_{k}\|
\leq
\left\|\prod_{j=1}^{k-1}C(j)B(j+1)\right\|\|B(1)\|\|C(k)\|.
$$

The matrices $ C(j)B(j+1) $ are triangular and independent for all $ j=1,\ldots,k-1 $. With Theorem~\ref{T-lambdaTr}, we get
$$
\lambda
\leq
\mathop\mathrm{tr}(\mathsf{E}[C(1)B(1)]).
\qed
$$
\renewcommand{\qed}{}
\end{proof}

\section{Algebraic models of queueing networks}

Queueing networks with fork-join operations present a quite general class of dynamical systems that can be described in terms of the semiring $ \mathbb{R}_{\max,+} $ by the equation \cite{Krivulin1994Using,Krivulin1995Amax-algebra,Krivulin1996Analgebraic,Krivulin1996Max-plus,Krivulin2001Algebraic}
\begin{equation}\label{E-xkAkxk1}
\bm{x}(k)
=
A(k)\bm{x}(k-1).
\end{equation}

The fork and join operations allow customers (jobs, tasks) to be split into parts, and to be merged into one, when they circulate through the system. The fork-join formalism proves to be useful in the description of dynamical processes in a variety of actual systems, including production processes in manufacturing, transmission of messages in communication networks, and parallel data processing in multi-processor computer systems. As an illustration of the fork and join operations, one can consider respectively splitting a message into packets in a communication network, each intended for
transmitting via separate paths, and merging the packets at a destination node of the network to restore the message \cite{Baccelli1989Queueing}.

\subsection{Fork-join queueing networks}

Consider a network with $ n $ single-server nodes and customers of a single class. The network topology is described by an oriented graph $ \mathcal{G}=(V,E) $, where $ V=\{1,\ldots,n\} $ is a set of nodes, and $ E=\{(i,j)\}\subset V\times V $ is a set of arcs that determine the transition routes of customers.

For every node $ i\in V $, we define the sets $ P(i)=\{j|(j,i)\in E\} $ and $ S(i)=\{j|(i,j)\in E\} $. The nodes $ i $ with $ P(i)=\emptyset $ are assumed to be source nodes that represent infinite external arrival streams of customers. If $ S(i)=\emptyset $, the node $ i $ is considered as an output node intended to release customers from the network.

Each node $ i $ includes a server and a buffer which together present a single-server queue operating under the first-come, first-served (FCFS) discipline. At the initial time, all servers are free of customers. The buffer at each source node has infinite number of customers, whereas the buffer at any other node $ i $ has $ c_{i} $ customers, $ 0\leq c_{i}<\infty $.

We suppose that in the network, the usual service procedure is combined with additional join and fork operations \cite{Baccelli1989Queueing} which may be performed in a node respectively before and after service of a customer. The join operation is actually thought to cause each customer which comes into node $ i $, not to enter the buffer at the server but to wait until at least one customer from every node $ j\in P(i) $ arrives. As soon as these customers arrive, they, taken one from each preceding node, are united to be treated as being one customer which then enters the buffer to become a new member of the queue.

The fork operation at node $ i $ is initiated every time the service of a customer is completed; it consists in giving rise to several new customers instead of the former one. As many new customers appear in node $ i $ as
there are succeeding nodes included in the set $ S(i) $. These customers simultaneously depart the node, each being passed to separate node $ j\in S(i) $. We assume that the execution of fork-join operations when appropriate customers are available, as well as the transition of customers within and between nodes require no time.

\subsection{Dynamical equation}

Let $ \tau_{ik} $ be the $k$th service time, and $ x_{i}(k) $ be the $k$th departure epoch at node $ i=1,\ldots,n $.

We assume that for all $ i=1,\ldots,n $, the sequence $ \{\tau_{ik}, k\geq1\} $ consists of independent and identically distributed nonnegative random variables with a finite mean value. Furthermore, for each $ k $, the random variables $ \tau_{ik} $ and $ \tau_{jk} $ are independent for all $ i\ne j $.

With the condition that the network starts operating at time zero, we put $ x_{i}(0)=0=\mathbb{1} $ and $ x_{i}(k)=-\infty=\mathbb{0} $ for all $ k<0 $. We introduce notation
$$
\bm{x}(k)
=
\left(
	\begin{array}{c}
		x_{1}(k) \\
		\vdots \\
		x_{n}(k)
	\end{array}
\right),
\qquad
T_{k}
=
\left(
	\begin{array}{ccc}
		\tau_{1k}		& 				& \mathbb{0} \\
								& \ddots	& \\
		\mathbb{0}	& 				& \tau_{nk}
	\end{array}
\right).
$$

Let us define $ M=\max\{c_{i}| c_{i}<\infty, i=1,\ldots,n\} $. For each $ m=0,1,\ldots,M $, we introduce the matrix $ G_{m}=(g_{ij}^{m}) $ with entries
$$
g_{ij}^{m}
=
\begin{cases}
\mathbb{1},	& \text{if $ i\in P(j) $ and $ m=c_{j} $}, \\
\mathbb{0},	& \text{otherwise}.
\end{cases}
$$

It is easy to see that the matrix $ G_{m} $ can be considered as an adjacency matrix of the partial graph $ \mathcal{G}_{m}=(V,E_{m}) $, where $ E_{m}=\{(i,j)|i\in P(j), c_{j}=m\} $.

Now we can formulate the following result \cite{Krivulin1995Amax-algebra,Krivulin1996Max-plus,Krivulin2001Algebraic}.

\begin{lemma}
Suppose that the graph $ \mathcal{G}_{0} $ associated with the matrix $ G_{0} $ is acyclic, and $ r $ is the length of its longest path. Then the dynamics of the network is described in the semiring $ \mathbb{R}_{\max,+} $ by the equation
\begin{equation}
\bm{x}(k)
=
\bigoplus_{m=1}^{M}A_{m}(k)\bm{x}(k-m),\label{E-xk-Exp}
\end{equation}
where
\begin{align*}
A_{1}(k)
&=
(I\oplus T_{k}G_{0}^{T})^{r}T_{k}(I\oplus G_{1}^{T}), \\
A_{m}(k)
&=
(I\oplus T_{k}G_{0}^{T})^{r}T_{k}G_{m}^{T},
\quad
m=2,\ldots,M.
\end{align*}

\end{lemma}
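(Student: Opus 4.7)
My plan is to derive a scalar recursion per node that captures the FCFS and fork--join constraints, assemble it into an implicit vector equation on $\bm{x}(k)$, and then invert the instantaneous (zero-buffer) coupling using a Kleene star that terminates because $\mathcal{G}_0$ is acyclic with longest path of length $r$.

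The starting point is a per-node recursion. For each $j\in V$, the FCFS server discipline together with the join-before-service rule gives
\begin{equation*}
x_j(k)=\tau_{jk}\left(x_j(k-1)\oplus\bigoplus_{i\in P(j)}x_i(k-c_j)\right),
\end{equation*}
in which the term $x_j(k-1)$ encodes ``server becomes free'' and $\bigoplus_{i\in P(j)}x_i(k-c_j)$ encodes ``the $k$th customer to enter the buffer at $j$ has been joined from the $(k-c_j)$th departure of each predecessor'', the shift by $c_j$ accounting for the customers already present in the buffer at time zero. The conventions $x_i(0)=\mathbb{1}$ and $x_i(k)=\mathbb{0}$ for $k<0$ uniformly absorb the boundary cases $k\le c_j$, and source nodes are handled automatically because $P(j)=\emptyset$ yields an empty supremum $\mathbb{0}$, so the recursion collapses to $x_j(k)=\tau_{jk}x_j(k-1)$, consistent with an infinite external arrival stream.

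Next, I would re-express the scalar system in vector form by grouping the predecessor terms according to $m=c_j$ using the matrices $G_m$, and isolating the $m=0$ contribution that ties $\bm{x}(k)$ to itself:
\begin{equation*}
\bm{x}(k)=T_kG_0^{T}\bm{x}(k)\oplus T_k(I\oplus G_1^{T})\bm{x}(k-1)\oplus\bigoplus_{m=2}^{M}T_kG_m^{T}\bm{x}(k-m).
\end{equation*}
This is a Kleene-type equation $\bm{y}=H\bm{y}\oplus\bm{b}$ with $H=T_kG_0^{T}$. Because $\mathcal{G}_0$ is acyclic with longest path $r$, no path of length greater than $r$ exists, so $G_0^{r+1}=\mathbb{0}$; since $T_k$ is diagonal, the sparsity pattern of $H^m$ coincides with that of $(G_0^{T})^m$, so $H^{r+1}=\mathbb{0}$ as well. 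Hence the Kleene star truncates to $H^{*}=I\oplus H\oplus\cdots\oplus H^{r}$, and by idempotency of $\oplus$ the binomial-like expansion collapses coefficients to give $H^{*}=(I\oplus T_kG_0^{T})^{r}$. Left-multiplying $\bm{b}$ by $H^{*}$ yields exactly $A_1(k)=(I\oplus T_kG_0^{T})^{r}T_k(I\oplus G_1^{T})$ and $A_m(k)=(I\oplus T_kG_0^{T})^{r}T_kG_m^{T}$ for $m\ge 2$, matching the stated equation.

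The step I expect to be most delicate is the first one: one must argue carefully why instantaneous relays along zero-buffer arcs genuinely couple $x_j(k)$ to $x_i(k)$ with the \emph{same} epoch index, why the various boundary situations (the first $c_j$ departures, source nodes, and output nodes) are all uniformly captured by the single scalar formula, and why the diagonal factor $T_k$ interleaved in the products $(T_kG_0^{T})^m$ does not destroy nilpotency. Once the scalar recursion is pinned down, the matrix assembly and the Kleene-star resolution reduce to routine algebra in $\mathbb{R}_{\max,+}$.
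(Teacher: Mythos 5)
Your derivation is correct, and it is the standard route: the per-node FCFS/join recursion, grouping predecessor terms by $m=c_j$ via $G_m$, and resolving the implicit $m=0$ coupling through the truncated Kleene star $(I\oplus T_kG_0^{T})^{r}$ justified by nilpotency of $T_kG_0^{T}$ under acyclicity of $\mathcal{G}_0$. The paper itself states this lemma without proof, citing \cite{Krivulin1995Amax-algebra,Krivulin1996Max-plus,Krivulin2001Algebraic}, and your argument reproduces exactly the derivation given in those references, including the correct handling of the boundary conventions $x_i(0)=\mathbb{1}$, $x_i(k)=\mathbb{0}$ for $k<0$.
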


\subsection{Networks with finite buffers}

Suppose now that the buffers at servers in the network may have limited capacity. In such a network, servers may be blocked according to some blocking mechanism \cite{Baccelli1989Queueing,Krivulin1995Amax-algebra}. Below we consider networks operating under the manufacturing and communication blocking rules which are commonly encountered in practice.

Manufacturing blocking at node $ i $ implies that a customer cannot release the server if there is at least one succeeding node $ j\in S(i) $ without empty space in its buffer. The communication blocking rule requires the server in node $ i $ not to initiate service of a customer until there is an empty space in the buffer in each node $ j \in S(i) $.

Suppose that the buffer at node $ i $ has capacity $ b_{i} $, $ 0\leq b_{i}\leq\infty $. Clearly, for all $ i=1,\ldots,n $, we have $ b_{i}\geq c_{i} $.

Let us define $ M_{1}=\max\{c_{i}| c_{i}<\infty, i=1,\ldots,n\} $ and $ M_{2}=\max\{b_{i}| b_{i}<\infty, i=1,\ldots,n\}+1 $. For each $ m=1,\ldots,M_{2} $, we introduce the matrix $ H_{m}=(h_{ij}^{m}) $ with its entries
$$
h_{ij}^{m}
=
\begin{cases}
\mathbb{1},	& \text{if $ j\in S(i) $ and $ m=b_{j}+1 $}, \\
\mathbb{0},	& \text{otherwise}.
\end{cases}
$$

Consider $ M=\max\{M_{1},M_{2}\} $. If $ M_{2}>M_{1} $, then put $ G_{m}=\mathbb{0} $ for all $ m=M_{1}+1,M_{1}+2,\ldots,M_{2} $.

We have the following result \cite{Krivulin1995Amax-algebra,Krivulin1996Max-plus,Krivulin2001Algebraic}.
\begin{lemma}
Suppose that the graph $ \mathcal{G}_{0} $ associated with the matrix $ G_{0} $ is acyclic, and $ r $ is the length of its longest path. Then the dynamics of the network is described in the semiring $ \mathbb{R}_{\max,+} $ by the equation \eqref{E-xk-Exp} with the matrices defined under manufacturing blocking rule as
\begin{align*}
A_{1}(k)
&=
(I\oplus T_{k}G_{0}^{T})^{r}( T_{k}(I\oplus G_{1}^{T})\oplus H_{1}), \\
A_{m}(k)
&=
(I\oplus T_{k}G_{0}^{T})^{r}( T_{k}G_{m}^{T}\oplus H_{m}),
\quad
m=2,\ldots,M;
\end{align*}
and under communication blocking rule as
\begin{align*}
A_{1}(k)
&=
(I\oplus T_{k}G_{0}^{T})^{r}T_{k}(I\oplus G_{1}^{T}\oplus H_{1}), \\
A_{m}(k)
&=
(I\oplus T_{k}G_{0}^{T})^{r}T_{k}(G_{m}^{T}\oplus H_{m}),
\quad
m=2,\ldots,M.
\end{align*}
\end{lemma}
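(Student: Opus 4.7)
The plan is to translate the FCFS/fork-join/blocking discipline into scalar max-plus recursions for the departure epochs $x_i(k)$, assemble them into a vector equation involving $T_k$, the $G_m$'s and the $H_m$'s, and then eliminate the implicit instantaneous dependency induced by nodes with $c_j=0$ by the same Kleene-star inversion that underlies the previous lemma.

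I would begin by writing the kinematic recursion at each node $i$. The $k$th customer there begins service as soon as the server has released the $(k-1)$st customer, the join is complete (every predecessor $j\in P(i)$ has delivered its $(k-c_i)$th customer, contributing $\bigoplus_{j\in P(i)} x_j(k-c_i)$), and, under the communication rule only, each successor buffer has room, yielding the lower bound $x_j(k-b_j-1)$ for every $j\in S(i)$. Multiplying by the service time $\tau_{ik}$ gives $x_i(k)$; under the manufacturing rule the successor-buffer constraint is then imposed \emph{additively} outside $T_k$, since that rule forbids release and not start of service. Recognising that $(G_m^T\bm{x})_i$ equals $\bigoplus_{j\in P(i)} x_j$ when $c_i=m$ and $\mathbb{0}$ otherwise, and that $H_m$ plays the analogous role for successor blocking with the shift $m=b_j+1$, I arrive at the vector equation
$$
\bm{x}(k) = T_k G_0^T\bm{x}(k) \oplus (T_k(I\oplus G_1^T)\oplus H_1)\bm{x}(k-1) \oplus \bigoplus_{m=2}^{M}(T_kG_m^T\oplus H_m)\bm{x}(k-m)
$$
for manufacturing blocking, and the analogous form with every $H_m$ placed inside the $T_k$ factor for communication blocking.

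The second step is to solve this fixed-point equation for $\bm{x}(k)$. Writing it as $\bm{x}(k) = A\bm{x}(k)\oplus\bm{b}$ with $A = T_kG_0^T$, the minimal max-plus solution is $A^*\bm{b}$ with $A^* = I\oplus A\oplus A^2\oplus\cdots$. Since $\mathcal{G}_0$ is acyclic with longest path $r$, $G_0^T$ and hence $A$ is nilpotent of index at most $r+1$, so $A^* = I\oplus A\oplus\cdots\oplus A^r = (I\oplus A)^r$, the last equality being the idempotent binomial identity. Distributing this prefactor $(I\oplus T_kG_0^T)^r$ through $\bm{b}$ reproduces the claimed $A_1(k)$ and $A_m(k)$ in both blocking regimes.

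The main obstacle is the bookkeeping of the blocking constraints: justifying the shift $m=b_j+1$, which expresses that the effective capacity at node $j$ (its buffer $b_j$ plus the customer currently in service) bounds how far ahead of $j$'s departure stream node $i$ may push its own departures; and placing each $H_m$ outside the $T_k$ factor under manufacturing blocking but inside it under communication blocking. Once these placements are fixed, isolating the $m=0$ term to form the implicit equation, merging the $m=1$ contributions into a single $A_1(k)$, and collapsing the nilpotent Kleene star to $(I\oplus T_kG_0^T)^r$ are routine verifications.
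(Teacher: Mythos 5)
The paper states this lemma without proof, citing earlier work; your derivation --- scalar recursions for the departure epochs, with the successor constraint $x_j(k-b_j-1)$ entering additively outside $T_k$ under manufacturing blocking (a constraint on release) and inside $T_k$ under communication blocking (a constraint on service initiation), followed by elimination of the implicit term $T_kG_0^{T}\bm{x}(k)$ via the nilpotent Kleene star $(T_kG_0^{T})^{*}=I\oplus T_kG_0^{T}\oplus\cdots\oplus(T_kG_0^{T})^{r}=(I\oplus T_kG_0^{T})^{r}$ --- is exactly the standard argument of the cited references, and it is correct. The one step you gloss is the one you flag yourself: justifying the shift $m=b_j+1$ requires the explicit occupancy count at the blocked node (arrivals from predecessors plus the $c_j$ initial customers minus departures, compared against the $b_j$ buffer slots plus the server position), and carrying that count through is the only nontrivial bookkeeping a complete proof must supply.
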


\section{The mean service cycle time in queueing systems}

Consider a queueing system and suppose that its dynamics is described by equation~\eqref{E-xkAkxk1}. The evolution of the system can be represented as a sequence of service cycles. The first cycle starts at the initial time, and it is terminated as soon as all the servers in the network complete their first service, the second cycle is terminated as soon as the servers complete their second service, and so on. Clearly, the completion time of the $k$th cycle can be represented as $ \|\bm{x}(k)\| $. With the condition $ \bm{x}(0)=0 $, we have
$$
\|\bm{x}(k)\|
=
\|A_{k}\|,
\qquad
A_{k}
=
A(k)\cdots A(1).
$$

In the queueing systems, the mean growth rate of the state vector (Lyapunov exponent) can be regarded as the mean service cycle time. The reciprocal of the mean growth rate can be considered as the system throughput.

Below we show how  for some queueing networks, the mean service cycle time is evaluated based on examination of their related system matrices $ A(k) $.

\subsection{Open and closed tandem queues}

Tandem queueing systems present networks with the simplest topology determined by graphs which include only the nodes with no more than one incoming and outgoing arcs. Consider an open system with $ n $ nodes and infinite buffers, depicted in Figure~\ref{F-OTQS}.
\begin{figure}[ht]
\begin{center}
\begin{picture}(110,20)
\newsavebox\queue
\savebox{\queue}(10,6){\thicklines
 \put(0,3){\line(1,0){6}}
 \put(0,-3){\line(1,0){6}}
 \put(6,3){\line(0,-1){6}}
 \put(9,0){\circle{6}}}

\put(11,16){$1$}
\put(0,0){$c_{1}=\infty$}
\put(3,7){\usebox\queue}
\put(15,10){\vector(1,0){15}}

\put(38,16){$2$}
\put(27,0){$c_{2}=0$}
\put(30,7){\usebox\queue}
\put(42,10){\vector(1,0){15}}

\multiput(60,10)(2,0){3}{\circle*{1}}
\put(67,10){\vector(1,0){15}}

\put(90,16){$n$}
\put(79,0){$c_{n}=0$}
\put(82,7){\usebox\queue}
\put(94,10){\vector(1,0){15}}

\end{picture}
\end{center}
\caption{Open tandem queues with infinite buffers.}\label{F-OTQS}
\end{figure}
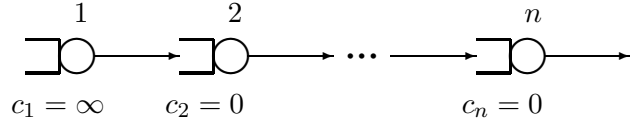

Since $ M=0 $, we put $ G_{1}=\mathbb{0} $ and then get equation \eqref{E-xkAkxk1} with the matrix
$$
A(k)
=
(I\oplus T_{k}G_{0}^{T})^{n-1}T_{k}
=
\left(
	\begin{array}{cccc}
		\tau_{1k}									& \mathbb{0}								& \ldots	& \mathbb{0} \\
		\tau_{1k}\tau_{2k}				& \tau_{2k}									& 				& \mathbb{0} \\
		\vdots										& \vdots										& \ddots	& \\
		\tau_{1k}\cdots\tau_{nk}	& \tau_{2k}\cdots\tau_{nk}	& \ldots	& \tau_{nk}
	\end{array}
\right).
$$

The state transition matrix is triangular. The application of Theorem~\ref{T-lambdaTr} yields
$$
\lambda
=
\mathop\mathrm{tr}(\mathsf{E}T_{1})
=
\max(\mathsf{E}\tau_{11},\ldots,\mathsf{E}\tau_{n1}).
$$

Consider a closed tandem queueing system presented in Figure~\ref{F-CTQS}.
\begin{figure}[ht]
\begin{center}
\begin{picture}(120,30)
\savebox{\queue}(10,6){\thicklines
 \put(0,3){\line(1,0){6}}
 \put(0,-3){\line(1,0){6}}
 \put(6,3){\line(0,-1){6}}
 \put(9,0){\circle{6}}}

\put(23,24){$1$}
\put(16,8){$c_{1}$}
\put(15,15){\usebox\queue}
\put(27,18){\vector(1,0){15}}

\put(50,24){$2$}
\put(43,8){$c_{2}$}
\put(42,15){\usebox\queue}
\put(54,18){\vector(1,0){15}}

\multiput(72,18)(2,0){3}{\circle*{1}}
\put(79,18){\vector(1,0){15}}

\put(102,24){$n$}
\put(95,8){$c_{n}$}
\put(94,15){\usebox\queue}
\put(106,18){\vector(1,0){15}}

\put(121,18){\line(0,-1){18}}
\put(121,0){\line(-1,0){121}}
\put(0,0){\line(0,1){18}}
\put(0,18){\vector(1,0){15}}

\end{picture}
\end{center}
\caption{A closed tandem queueing system with infinite buffers.}\label{F-CTQS}
\end{figure}
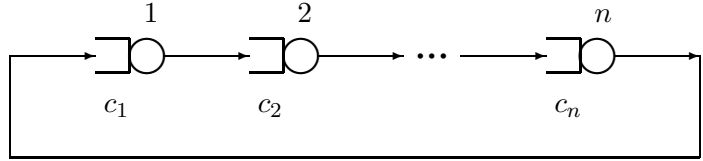

Suppose $ c_{i}=1 $ for all $ i=1,\ldots,n $. Then we have $ M=1 $. The state transition matrix takes the form
$$
A(k)
=
T_{k}(I\oplus G_{1}^{T})
=
\left(
	\begin{array}{ccccc}
		\tau_{1k}		& \mathbb{0}	& \ldots		& \mathbb{0}	& \tau_{1k} \\
		\tau_{2k}		& \tau_{2k}		& 					& \mathbb{0}	& \mathbb{0} \\
								& \ddots			& \ddots		& 						& \\
								& 						& \ddots		& \ddots			& \\
		\mathbb{0}	& \mathbb{0}	& 					& \tau_{nk}		& \tau_{nk}
	\end{array}
\right).
$$

With $ n=2 $, the matrix $ A(k) $ is a similarity matrix with the factor $ \|A(k)\|=\tau_{1k}\oplus\tau_{2k}=\mathop\mathrm{tr}T_{k} $. Therefore, we have
$$
\lambda
=
\mathsf{E}\mathop\mathrm{tr}T_{1}
=
\mathsf{E}\max(\tau_{11},\tau_{21}).
$$

\subsection{Tandem queues with finite buffers and blocking}

Consider an open tandem queueing system which may have finite buffers (Figure~\ref{F-OTQSB}).
\begin{figure}[ht]
\begin{center}
\begin{picture}(110,30)
\savebox{\queue}(10,6){\thicklines
 \put(0,3){\line(1,0){6}}
 \put(0,-3){\line(1,0){6}}
 \put(6,3){\line(0,-1){6}}
 \put(9,0){\circle{6}}}

\newsavebox\queuef
\savebox{\queuef}(10,6){\thicklines
 \put(0,3){\line(1,0){6}}
 \put(0,-3){\line(1,0){6}}
 \put(6,3){\line(0,-1){6}}
 \put(0,3){\line(0,-1){6}}
 \put(9,0){\circle{6}}}

\put(11,24){$1$}
\put(0,7){$c_{1}=\infty$}
\put(0,0){$b_{1}=\infty$}
\put(3,15){\usebox\queue}
\put(15,18){\vector(1,0){15}}

\put(38,24){$2$}
\put(27,7){$c_{2}=0$}
\put(34,0){$b_{2}$}
\put(30,15){\usebox\queuef}
\put(42,18){\vector(1,0){15}}

\multiput(60,18)(2,0){3}{\circle*{1}}
\put(67,18){\vector(1,0){15}}

\put(90,24){$n$}
\put(79,7){$c_{n}=0$}
\put(86,0){$b_{n}$}
\put(82,15){\usebox\queuef}
\put(94,18){\vector(1,0){15}}

\end{picture}
\end{center}
\caption{Open tandem queues with finite buffers.}\label{F-OTQSB}
\end{figure}

Suppose that the system operates under the manufacturing blocking rule. Let $ n=3 $, $ b_{1}=b_{2}=\infty $, and $ b_{3}=0 $. Then we have $ M=1 $. Evaluation of the system matrix gives
$$
A(k)
=
(I\oplus T_{k}G_{0}^{T})^{2}( T_{k}\oplus H_{1})
=
\left(
	\begin{array}{ccccc}
		\tau_{1k}										& \mathbb{0}					& \mathbb{0} \\
		\tau_{1k}\tau_{2k}					& \tau_{2k}						& \mathbb{1} \\
		\tau_{1k}\tau_{2k}\tau_{3k}	& \tau_{2k}\tau_{3k}	& \tau_{3k}
	\end{array}
\right).
$$

Taking into account collinearity of the last two rows, we get the skeleton decomposition
\begin{multline*}
A(k)
=
\left(
	\begin{array}{ccccc}
		\tau_{1k}										& \mathbb{0}					& \mathbb{0} \\
		\tau_{1k}\tau_{2k}					& \tau_{2k}						& \mathbb{1} \\
		\tau_{1k}\tau_{2k}\tau_{3k}	& \tau_{2k}\tau_{3k}	& \tau_{3k}
	\end{array}
\right)
\\
=
\left(
	\begin{array}{cc}
		\mathbb{1}	& \mathbb{0} \\
		\mathbb{0}	& \mathbb{1} \\
		\mathbb{0}	& \tau_{3k}
	\end{array}
\right)
\left(
	\begin{array}{ccc}
		\tau_{1k}										& \mathbb{0}					& \mathbb{0} \\
		\tau_{1k}\tau_{2k}					& \tau_{2k}						& \mathbb{1}
	\end{array}
\right)
=
B(k)C(k).
\end{multline*}

Consider the matrix
$$
C(k)B(k+1)
=
\left(
	\begin{array}{cc}
		\tau_{1k}										& \mathbb{0} \\
		\tau_{1k}\tau_{2k}					& \tau_{2k}\oplus\tau_{3,k+1}
	\end{array}
\right).
$$

By application of Theorem~\ref{T-lambdaC1B2}, we conclude that
$$
\lambda
=
\mathop\mathrm{tr}(\mathsf{E}[C(1)B(1)])
=
\max(\mathsf{E}\tau_{11},\mathsf{E}\max(\tau_{21},\tau_{31})).
$$

Assume that the system in Figure~\ref{F-OTQSB} follows the communication blocking rule. With $ n=3 $, $ b_{1}=\infty $, and $ b_{2}=b_{3}=0 $, we have $ M=1 $, $ G_{1}=\mathbb{0} $, and $ H_{1}=G_{0} $. The state transition matrix is represented as
$$
A(k)
=
(I\oplus T_{k}G_{0}^{T})^{2}T_{k}(I\oplus G_{0})
=
\left(
	\begin{array}{ccc}
		\tau_{1k}										& \tau_{1k}										& \mathbb{0} \\
		\tau_{1k}\tau_{2k}					& \tau_{1k}\tau_{2k}					& \tau_{2k} \\
		\tau_{1k}\tau_{2k}\tau_{3k}	& \tau_{1k}\tau_{2k}\tau_{3k}	& \tau_{2k}\tau_{3k}
	\end{array}
\right).
$$

Furthermore, we get the decomposition
$$
A(k)
=
\left(
	\begin{array}{cc}
		\mathbb{1}					& \mathbb{0} \\
		\tau_{2k}						& \tau_{2k} \\
		\tau_{2k}\tau_{3k}	& \tau_{2k}\tau_{3k}
	\end{array}
\right)
\left(
	\begin{array}{ccc}
		\tau_{1k}		& \tau_{1k}		& \mathbb{0} \\
		\mathbb{0}	& \mathbb{0}	& \mathbb{1}
	\end{array}
\right)
=
B(k)C(k).
$$

Consider the matrix product
\begin{multline*}
C(k)B(k+1)
=
\left(
	\begin{array}{cc}
		\tau_{1k}\tau_{2,k+1}	& \tau_{1k}\tau_{2,k+1} \\
		\tau_{2,k+1}\tau_{3,k+1}	& \tau_{2,k+1}\tau_{3,k+1}
	\end{array}
\right)
\\
=
\left(
	\begin{array}{c}
		\tau_{1k} \\
		\tau_{3,k+1}
	\end{array}
\right)
\left(
	\begin{array}{cc}
		\tau_{2,k+1}	& \tau_{2,k+1}
	\end{array}
\right)
=
\bm{u}(k)\bm{v}^{T}(k).
\end{multline*}

Since the product is actually a matrix of rank $ 1 $, we finally get
$$
\lambda
=
\mathsf{E}[\bm{v}^{T}(1)\bm{u}(2)]
=
\mathsf{E}\tau_{21}+\mathsf{E}\max(\tau_{11},\tau_{31}).
$$

\subsection{A fork-join network}

Now we turn to network models which involve fork and join operations. Consider a fork-join network with $ n=5 $ nodes and infinite buffers, depicted in Figure~\ref{F-AFJQN} \cite{Krivulin1996Max-plus}.
\begin{figure}[ht]
\begin{center}
\begin{picture}(110,60)
\savebox{\queue}(10,6){\thicklines
 \put(0,3){\line(1,0){6}}
 \put(0,-3){\line(1,0){6}}
 \put(6,3){\line(0,-1){6}}
 \put(9,0){\circle{6}}}

\put(11,32){$1$}
\put(0,16){$c_{1}=\infty$}
\put(3,23){\usebox\queue}
\put(15,26){\vector(1,0){15}}

\put(38,32){$2$}
\put(27,16){$c_{2}=0$}
\put(30,23){\usebox\queue}
\put(42,26){\vector(1,1){15}}
\put(42,26){\vector(1,-1){15}}

\put(65,47){$3$}
\put(54,31){$c_{3}=1$}
\put(57,38){\usebox\queue}
\put(69,41){\line(1,1){10}}
\put(79,51){\line(0,1){8}}
\put(79,59){\line(-1,0){59}}
\put(20,59){\line(0,-1){23}}
\put(20,36){\vector(1,-1){10}}
\put(69,41){\vector(1,-1){15}}

\put(65,17){$4$}
\put(54,1){$c_{4}=0$}
\put(57,8){\usebox\queue}
\put(69,11){\vector(1,1){15}}

\put(92,32){$5$}
\put(81,16){$c_{5}=1$}
\put(84,23){\usebox\queue}
\put(96,26){\vector(1,0){14}}

\end{picture}
\end{center}
\caption{A fork-join queueing network with infinite buffers.}\label{F-AFJQN}
\end{figure}
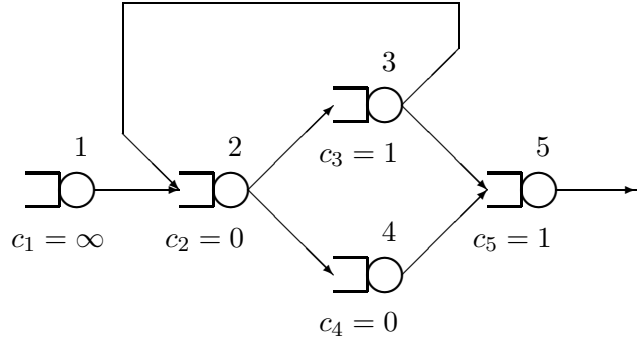

Since $ r=2 $, we arrive at equation \eqref{E-xkAkxk1} with the matrix
\begin{multline*}
A(k)
=
(I\oplus T_{k}G_{0}^{T})^{2}T_{k}(I\oplus G_{1}^{T})
\\
=
\left(
	\begin{array}{ccccc}
		\tau_{1k}										& \mathbb{0}									& \mathbb{0}									& \mathbb{0}	& \mathbb{0} \\
		\tau_{1k}\tau_{2k}					& \tau_{2k}\tau_{3k}					& \tau_{2k}\tau_{3k}					& \mathbb{0}	& \mathbb{0} \\
		\mathbb{0}									& \tau_{3k}										& \tau_{3k}										& \mathbb{0}	& \mathbb{0} \\
		\tau_{1k}\tau_{2k}\tau_{4k}	& \tau_{2k}\tau_{3k}\tau_{4k}	& \tau_{2k}\tau_{3k}\tau_{4k}	& \tau_{4k}		& \mathbb{0} \\
		\mathbb{0}									& \mathbb{0}									& \tau_{5k}										& \tau_{5k}		& \tau_{5k}
	\end{array}
\right).
\end{multline*}

The matrix can be represented as
\begin{multline*}
A(k)
=
\left(
	\begin{array}{cccc}
		\mathbb{1}					& \mathbb{0}					& \mathbb{0}	& \mathbb{0} \\
		\tau_{2k}						& \tau_{2k}						& \mathbb{0}	& \mathbb{0} \\
		\mathbb{0}					& \mathbb{1}					& \mathbb{0}	& \mathbb{0} \\
		\tau_{2k}\tau_{4k}	& \tau_{2k}\tau_{4k}	& \tau_{4k}		& \mathbb{0} \\
		\mathbb{0}					& \mathbb{0}					& \tau_{5k}		& \tau_{5k}
	\end{array}
\right)
\left(
	\begin{array}{ccccc}
		\tau_{1k}	& \mathbb{0}		& \mathbb{0}	& \mathbb{0}	& \mathbb{0} \\
		\mathbb{0}	& \tau_{3k}		& \tau_{3k}		& \mathbb{0}	& \mathbb{0} \\
		\mathbb{0}	& \mathbb{0}	& \mathbb{1}	& \mathbb{1}	& \mathbb{0} \\
		\mathbb{0}	& \mathbb{0}	& \mathbb{1}	& \mathbb{0}	& \mathbb{1}
	\end{array}
\right)
\\
=
B(k)C(k).
\end{multline*}

Let us examine the matrix
$$
C(k)B(k+1)
=
\left(
	\begin{array}{cccc}
		\tau_{1k}									& \mathbb{0}								& \mathbb{0}		& \mathbb{0} \\
		\tau_{2,k+1}\tau_{3k}			& \tau_{2,k+1}\tau_{3k}			& \mathbb{0}		& \mathbb{0} \\
		\tau_{2,k+1}\tau_{4,k+1}	& \tau_{2,k+1}\tau_{4,k+1}	& \tau_{4,k+1}	& \mathbb{0} \\
		\mathbb{0}								& \mathbb{1}								& \tau_{5,k+1}	& \tau_{5,k+1}
	\end{array}
\right).
$$

Taking into account that the matrix is triangular, we apply Theorem~\ref{T-lambdaC1B2} to conclude that
$$
\lambda
=
\mathop\mathrm{tr}(\mathsf{E}[C(1)B(1)])
=
\max(\mathsf{E}\tau_{11},\mathsf{E}\tau_{21}+\mathsf{E}\tau_{31},\mathsf{E}\tau_{41},\mathsf{E}\tau_{51}).
$$

\subsection{A system with round routing}

Consider an open system depicted in Figure~\ref{F-RRQS}, which consists of $ n=l+1 $ queues labeled with $ 0,1,\ldots,l $. Queue $ 0 $ is intended to represent an external arrival stream of customers. Each incoming
customer has to go to one of the other queues, being chosen by a regular round routing mechanism, and then leaves the system.

With the routing mechanism, the customer that is the first to depart queue $ 0 $ goes to the $1$st queue, the second customer does to the $2$nd queue, and so on. After the $l$th customer directed to queue $ l $, the next $(l+1)$st customer is directed to the $1$st queue once again, and the procedure is further repeated round and round.
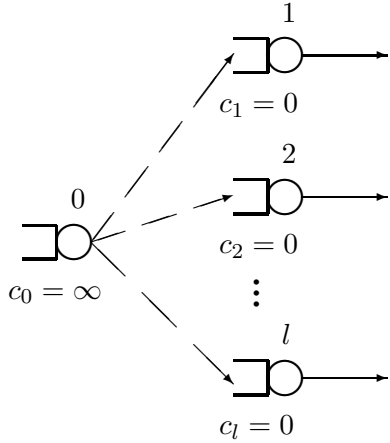
\begin{figure}[ht]
\begin{center}
\begin{picture}(70,75)
\savebox{\queue}(10,6){\thicklines
 \put(0,3){\line(1,0){6}}
 \put(0,-3){\line(1,0){6}}
 \put(6,3){\line(0,-1){6}}
 \put(9,0){\circle{6}}}

\put(11,40){$0$}
\put(0,24){$c_{0}=\infty$}
\put(3,31){\usebox\queue}
\multiput(15,34)(6,8){4}{\line(3,4){4}}
\put(36,62){\vector(3,4){4}}

\multiput(15,34)(9,3){3}{\line(3,1){6}}
\put(36,41){\vector(3,1){4}}

\multiput(15,34)(9,-9){2}{\line(1,-1){6}}
\put(33,16){\vector(1,-1){7}}

\put(48,73){$1$}
\put(37,57){$c_{1}=0$}
\put(40,64){\usebox\queue}
\put(52,67){\vector(1,0){15}}

\put(48,48){$2$}
\put(37,32){$c_{2}=0$}
\put(40,39){\usebox\queue}
\put(52,42){\vector(1,0){15}}

\multiput(44,23)(0,2){3}{\circle*{1}}

\put(48,16){$l$}
\put(37,0){$c_{l}=0$}
\put(40,7){\usebox\queue}
\put(52,10){\vector(1,0){15}}

\end{picture}
\end{center}
\caption{A system with round robin routing.}\label{F-RRQS}
\end{figure}

The above system can be replaced with an equivalent fork-join network \cite{Krivulin1996Analgebraic} which consists of $ n=2l $ nodes (see Figure~\ref{F-RRFJQN}) provided that for every nodes $ i=l+1,l+2,\ldots,2l $ in the new system, the service time is defined as
$$
\tau_{ik}
=
\tau_{0,kl-2l+i}.
$$

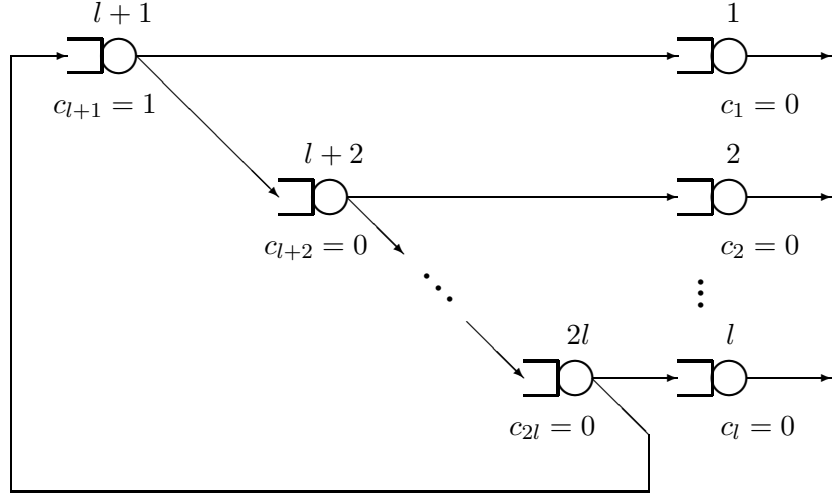
\begin{figure}[ht]
\begin{center}
\begin{picture}(145,85)
\savebox{\queue}(10,6){\thicklines
 \put(0,3){\line(1,0){6}}
 \put(0,-3){\line(1,0){6}}
 \put(6,3){\line(0,-1){6}}
 \put(9,0){\circle{6}}}

\put(14,83){$l+1$}
\put(7,67){$c_{l+1}=1$}
\put(10,74){\usebox\queue}
\put(22,77){\vector(1,0){95}}
\put(22,77){\vector(1,-1){25}}

\put(125,83){$1$}
\put(124,67){$c_{1}=0$}
\put(117,74){\usebox\queue}
\put(129,77){\vector(1,0){15}}

\put(51,58){$l+2$}
\put(44,42){$c_{l+2}=0$}
\put(47,49){\usebox\queue}
\put(59,52){\vector(1,0){58}}
\put(59,52){\vector(1,-1){10}}

\put(125,58){$2$}
\put(124,42){$c_{2}=0$}
\put(117,49){\usebox\queue}
\put(129,52){\vector(1,0){15}}

\put(80,30){\vector(1,-1){10}}

\put(97,26){$2l$}
\put(87,10){$c_{2l}=0$}
\put(90,17){\usebox\queue}
\put(102,20){\vector(1,0){15}}
\put(102,20){\line(1,-1){10}}
\put(112,10){\line(0,-1){10}}
\put(112,0){\line(-1,0){112}}

\put(0,0){\line(0,1){77}}
\put(0,77){\vector(1,0){10}}

\put(125,26){$l$}
\put(124,10){$c_{l}=0$}
\put(117,17){\usebox\queue}
\put(129,20){\vector(1,0){15}}

\multiput(73,38)(2,-2){3}{\circle*{1}}

\multiput(121,33)(0,2){3}{\circle*{1}}

\end{picture}
\end{center}
\caption{An equivalent fork-join network.}\label{F-RRFJQN}
\end{figure}

Suppose that $ l=2 $. Evaluation of the system matrix gives
$$
A(k)
=
(I\oplus T_{k}G_{0}^{T})^{2}T_{k}(I\oplus G_{1}^{T})
=
\left(
	\begin{array}{cccc}
		\tau_{1k}		& \mathbb{0}	& \tau_{1k}\tau_{3k}					& \tau_{1k}\tau_{3k} \\
		\mathbb{0}	& \tau_{2k}		& \tau_{2k}\tau_{3k}\tau_{4k}	& \tau_{2k}\tau_{3k}\tau_{4k} \\
		\mathbb{0}	& \mathbb{0}	& \tau_{3k}										& \tau_{3k} \\
		\mathbb{0}	& \mathbb{0}	& \tau_{3k}\tau_{4k}					& \tau_{3k}\tau_{4k}
	\end{array}
\right).
$$

Let us represent $ A(x) $ in the form
$$
A(k)
=
\left(
	\begin{array}{ccc}
		\tau_{1k}		& \mathbb{0}	& \tau_{1k} \\
		\mathbb{0}	& \tau_{2k}		& \tau_{2k}\tau_{4k} \\
		\mathbb{0}	& \mathbb{0}	& \mathbb{1} \\
		\mathbb{0}	& \mathbb{0}	& \tau_{4k}
	\end{array}
\right)
\left(
	\begin{array}{cccc}
		\mathbb{1}	& \mathbb{0}	& \mathbb{0}	& \mathbb{0} \\
		\mathbb{0}	& \mathbb{1}	& \mathbb{0}	& \mathbb{0} \\
		\mathbb{0}	& \mathbb{0}	& \tau_{3k}		& \tau_{3k}
	\end{array}
\right)
=
B(k)C(k).
$$

Since the matrix product
$$
C(k)B(k+1)
=
\left(
	\begin{array}{ccc}
		\tau_{1,k+1}	& \mathbb{0}		& \tau_{1,k+1} \\
		\mathbb{0}		& \tau_{2,k+1}	& \tau_{2,k+1}\tau_{4,k+1} \\
		\mathbb{0}		& \mathbb{0}		& \tau_{3k}\tau_{4,k+1}
	\end{array}
\right)
$$
has a triangular form, we get
$$
\lambda
=
\mathop\mathrm{tr}(\mathsf{E}[C(1)B(2)])
=
\max(\mathsf{E}\tau_{11},\mathsf{E}\tau_{21},\mathsf{E}\tau_{31}+\mathsf{E}\tau_{41}).
$$

Considering that $ \mathsf{E}\tau_{31}=\mathsf{E}\tau_{41}=\mathsf{E}\tau_{01} $, we finally have
$$
\lambda
=
\max(2\mathsf{E}\tau_{01},\mathsf{E}\tau_{11},\mathsf{E}\tau_{21}).
$$

\section{Conclusion}

A stochastic dynamical system governed by the vector equation which is linear in some idempotent semiring was considered. New general conditions for Lyapunov exponent to exist for the system were given which involve the spectral radius of the mean state transition matrix. New method of evaluation of the exponent was proposed based on a decomposition of the system state transition matrix.

The above method was applied to the analysis of a class of queueing systems including open and closed tandem queues with finite and infinite buffers, fork-join networks, and systems with round-robin routing. Examples of evaluation of the mean service cycle time for the systems were considered, and related results in the form of some functions of the mean values of random variables that determine the service time were presented.

\bibliographystyle{utphys}

\bibliography{Evaluation_of_Lyapunov_exponent_in_generalized_linear_dynamical_models_of_queueing_networks}

\providecommand{\href}[2]{#2}\begingroup\raggedright\begin{thebibliography}{10}

\bibitem{Cohen1988Subadditivity}
J.~E. Cohen, ``Subadditivity, generalized products of random matrices and
  operations research,'' \href{http://dx.doi.org/10.1137/1030002}{{\em SIAM
  Rev.} {\bfseries 30} no.~1, (March, 1988) 69--86}.

\bibitem{Baccelli1993Synchronization}
F.~L. Baccelli, G.~Cohen, G.~J. Olsder, and J.-P. Quadrat, {\em Synchronization
  and Linearity: An Algebra for Discrete Event Systems}.
\newblock Wiley Series in Probability and Statistics. Wiley, Chichester, 1993.
\newblock \url{http://www-rocq.inria.fr/metalau/cohen/documents/BCOQ-book.pdf}.

\bibitem{Kolokoltsov1997Idempotent}
V.~N. Kolokoltsov and V.~P. Maslov, {\em Idempotent Analysis and Its
  Applications}, vol.~401 of {\em Mathematics and Its Applications}.
\newblock Kluwer Academic Publishers, Dordrecht, 1997.

\bibitem{Litvinov1998Idempotent}
G.~L. Litvinov, V.~P. Maslov, and A.~N. Sobolevskii, ``Idempotent mathematics
  and interval analysis,'' tech. rep., The Erwin Schroedinger International
  Institute for Mathematical Physics, Vienna, 1998.
\newblock \href{http://arxiv.org/abs/math/9911126}{{\ttfamily
  arXiv:math/9911126 [math.NA]}}.
\newblock \url{http://www.mat.univie.ac.at/~esiprpr/esi632.pdf}.
\newblock (Preprint ESI 632).

\bibitem{Heidergott2006Maxplus}
B.~Heidergott, G.~J. Olsder, and J.~van~der Woude, {\em Max-plus at Work:
  Modeling and Analysis of Synchronized Systems}.
\newblock Princeton Series in Applied Mathematics. Princeton University Press,
  Princeton, 2006.

\bibitem{Jean-Marie1994Analytical}
A.~Jean-Marie, ``Analytical computation of lyapunov exponents in stochastic
  event graphs,'' in {\em Performance evaluation of parallel and distributed
  systems. Solution methods: proceedings of the third QMIPS workshop. Part 2},
  O.~J. Boxma and G.~M. Koole, eds., vol.~106 of {\em CWI Tracts},
  pp.~309--341.
\newblock CWI, Amsterdam, 1994.

\bibitem{Kingman1973Subadditive}
J.~F.~C. Kingman, ``Subadditive ergodic theory,'' {\em Ann. Probab.} {\bfseries
  1} no.~6, (December, 1973) 883--899.

\bibitem{Glasserman1995Stochastic}
P.~Glasserman and D.~D. Yao, ``Stochastic vector difference equations with
  stationary coefficients,'' {\em J. Appl. Probab.} {\bfseries 32} no.~4,
  (1995) 851--866.

\bibitem{Romanovskii1967Optimization}
I.~Romanovskii, ``Optimization of stationary control of a discrete
  deterministic process,'' \href{http://dx.doi.org/10.1007/BF01078754}{{\em
  Cybernetics} {\bfseries 3} (April, 1967) 52--62}.

\bibitem{Romanovskii1967Asymptotic}
I.~V. Romanovskii, ``Asymptotic behavior of a discrete deterministic process
  with continuous set of states,'' in {\em Optimal Planning}, vol.~8 of {\em
  Transactions of the Siberian Institute of Mathematics}, pp.~171--193.
\newblock 1967.
\newblock (in Russian).

\bibitem{Krivulin2007Ontheconvergence}
N.~K. Krivulin and I.~V. Romanovskii, ``On the convergence of matrix powers of
  a generalized linear operator in idempotent algebra,''
  \href{http://dx.doi.org/10.1007/s10958-007-0089-2}{{\em J. Math. Sci. (N.Y.)}
  {\bfseries 142} no.~1, (April, 2007) 1806--1816}.

\bibitem{Olsder1990Discrete}
G.~J. Olsder, J.~A.~C. Resing, R.~De~Vries, M.~S. Keane, and G.~Hooghiemstra,
  ``Discrete event systems with stochastic processing times,''
  \href{http://dx.doi.org/10.1109/9.50340}{{\em IEEE Trans. Automat. Control}
  {\bfseries 35} no.~3, (March, 1990) 299--302}.

\bibitem{Krivulin2007Growth}
N.~K. Krivulin, ``Growth rate of the state vector in a generalized linear
  stochastic system with a symmetric matrix,''
  \href{http://dx.doi.org/10.1007/s10958-007-0515-5}{{\em J. Math. Sci. (N.Y.)}
  {\bfseries 147} no.~4, (December, 2007) 6924--6928}.

\bibitem{Krivulin2008Evaluation}
N.~K. Krivulin, ``Evaluation of the growth rate of the state vector in a
  second-order generalized linear stochastic system,''
  \href{http://dx.doi.org/10.3103/S1063454108010068}{{\em Vestnik St.
  Petersburg Univ. Math.} {\bfseries 41} no.~1, (March, 2008) 28--38}.

\bibitem{Krivulin2001Onevaluation}
N.~Krivulin and N.~Nevzorov, {\em On evaluation of the mean service cycle time
  in tandem queuing systems}, pp.~145--155.
\newblock Nova Science Publishers, Huntington, NY, 2001.
\newblock \href{http://arxiv.org/abs/1212.5309}{{\ttfamily arXiv:1212.5309
  [math.OC]}}.

\bibitem{Krivulin2002Evaluation}
N.~K. Krivulin, ``Evaluation of the mean cycle time in fork-join queueing
  networks,'' {\em Vestnik St. Petersburg Univ. Math.} {\bfseries 35} no.~3,
  (September, 2002) 20--26.

\bibitem{Krivulin2003Estimation}
N.~K. Krivulin, ``Estimation of the rate of growth of the state vector of a
  generalized linear dynamical system with random matrix,'' {\em Vestnik St.
  Petersburg Univ. Math.} {\bfseries 36} no.~3, (September, 2003) 41--49.

\bibitem{Krivulin2005Thegrowth}
N.~K. Krivulin, ``The growth rate of state vector in a generalized linear
  dynamical system with random triangular matrix,'' {\em Vestnik St. Petersburg
  Univ. Math.} {\bfseries 38} no.~1, (March, 2005) 25--28.

\bibitem{Krivulin1994Using}
N.~K. Krivulin, ``Using max-algebra linear models in the representation of
  queueing systems,'' in {\em Proc. 5th SIAM Conf. on Applied Linear Algebra},
  J.~G. Lewis, ed., vol.~72 of {\em SIAM Proceedings Series}, pp.~155--160.
\newblock SIAM, Philadelphia, 1994.
\newblock \href{http://arxiv.org/abs/1210.6019}{{\ttfamily arXiv:1210.6019
  [math.OC]}}.

\bibitem{Krivulin1995Amax-algebra}
N.~K. Krivulin, ``A max-algebra approach to modeling and simulation of tandem
  queueing systems,''
  \href{http://dx.doi.org/10.1016/0895-7177(95)00117-K}{{\em Math. Comput.
  Modelling} {\bfseries 22} no.~3, (1995) 25--37},
  \href{http://arxiv.org/abs/1211.5811}{{\ttfamily arXiv:1211.5811 [math.NA]}}.

\bibitem{Krivulin1996Analgebraic}
N.~K. Krivulin, ``An algebraic approach in modelling and simulation of queueing
  networks,'' in {\em Proc. Intern. Conf. on Circuits, Systems and Computers
  '96}, N.~E. Mastorakis, ed., vol.~2, pp.~668--672, Hellenic Naval Academy.
\newblock 1996.

\bibitem{Krivulin1996Max-plus}
N.~K. Krivulin, ``Max-plus algebra models of queueing networks,'' in {\em Proc.
  Intern. Workshop on Discrete Event Systems WODES'96}, pp.~76--81.
\newblock IEE, London, August 19-21, 1996.
\newblock \href{http://arxiv.org/abs/1212.0578}{{\ttfamily arXiv:1212.0578
  [math.OC]}}.

\bibitem{Krivulin2001Algebraic}
N.~K. Krivulin, ``Algebraic models of queueing networks,'' in {\em Mathematical
  Models and Information Technology in Management}, N.~K. Krivulin and V.~V.
  Trofimov, eds., pp.~25--38.
\newblock St.~Petersburg State University, 2001.
\newblock (in Russian).

\bibitem{Baccelli1989Queueing}
F.~Baccelli and A.~M. Makowski, ``Queueing models for systems with
  synchronization constraints,'' \href{http://dx.doi.org/10.1109/5.21076}{{\em
  Proc. IEEE} {\bfseries 77} no.~1, (January, 1989) 138--160}.

\end{thebibliography}\endgroup

\end{document}